\newtheorem{theorem}{Theorem}
\newtheorem{corollary}{Corollary}
\newcommand{\beq}{\begin{equation*}}
\newcommand{\eeq}{\end{equation*}}
\newcommand{\beqn}{\begin{equation}}
\newcommand{\eeqn}{\end{equation}}
\newcommand{\ba}{\begin{array}}
\newcommand{\ea}{\end{array}}
\newcommand{\ds}{\displaystyle}
\newcommand{\RR}{\mathbb R}
\newcommand{\ZZ}{\mathbb Z}
\newcommand{\dd}{\mathrm{d}}
\newcommand{\R}{\mathcal{R}_{a,\,b,\,c}}
\newcommand{\Rha}{\mathcal{R}_{h_a}}
\newcommand{\Rhb}{\mathcal{R}_{h_b}}
\newcommand{\Rhc}{\mathcal{R}_{h_c}}
\newcommand{\C}{\mathcal{C}}
\newcommand{\Q}{\mathcal{Q}}
\newcommand{\T}{\mathcal{T}}
\newcommand{\E}{\mathrm{E}(Z)}
\begin{document}

\title{Hitting probabilities for random convex bodies and lattices of triangles}
\author{Uwe B\"asel}
\date{}
\maketitle

\vspace{-0.6cm}

\begin{abstract}
\noindent In the first part of this paper, we obtain symmetric formulae for the probabilities that a plane convex body hits exactly 1, 2, 3, 4, 5 or 6 triangles of a lattice of congruent triangles in the plane. Furthermore, a very simple formula for the expectation of the number of hit triangles is derived. In the second part, we calculate the hitting probabilities in the cases where the convex body is a rectangle, an ellipse and a half disc. Already known results for a line segment (needle) follow as special cases of the rectangle and the ellipse.\\[0.2cm]
\textbf{2010 Mathematics Subject Classification:} 60D05, 52A22\\[0.2cm]
\textbf{Keywords:} Integral geometry, geometric probabilities, random convex sets, hitting probabilities, intersection probabilities, lattice of triangles, Buffon's problem, Buffon's needle problem, rectangle, half disc
\end{abstract}

\vspace{-0.3cm}

\section{Introduction}
\vspace{-0.15cm}
We consider the random throw of a planar convex body $\C$ onto a plane with an unbounded lattice $\R$ of congruent triangles with sides $a$, $b$, $c$, and opposite angles $\alpha$, $\beta$, $\gamma$, respectively. (An example is shown in Fig.\:\ref{R}.) As usual, we denote the altitudes from the sides $a$, $b$, $c$ respectively as $h_a$, $h_b$, $h_c$, and have
\begin{align*}
  h_a = {} & b\sin\gamma = c\sin\beta\,,\quad 
  h_b = a\sin\gamma = c\sin\alpha\,,\quad
  h_c = a\sin\beta = b\sin\alpha\,.
\end{align*}
$\R$ is the union of three families $\Rha$, $\Rhb$ and $\Rhc$ of parallel lines,
\begin{align*}
  \Rha 
  = {} & \{(x,y)\in\RR^2\mid x\sin\beta+y\cos\beta=kh_a,\,k\in\ZZ\}\,,\\
  \Rhb
  = {} & \{(x,y)\in\RR^2\mid x\sin\alpha-y\cos\alpha=kh_b,\,k\in\ZZ\}\,,\\
  \Rhc
  = {} & \{(x,y)\in\RR^2\mid y=kh_c,\,k\in\ZZ\}\,.
\end{align*}
Every triangle of $\R$ is congruent to the triangle
\beq
  \T:=\{(x,y)\in\mathbb{R}^2\;|\;0\leq y\leq h_c\,,\;
			y\cot\alpha\leq x\leq c-y\cot\beta\}\,.
\eeq
Two triangles form a parallelogram that is congruent to the parallelogram
\beq
  \Q:=\{(x,y)\in\mathbb{R}^2\;|\;0\leq y\leq h_c\,,\;
			y\cot\alpha\leq x\leq c+y\cot\alpha\}\,,
\eeq 
see Fig.\:\ref{R} and \ref{Q}. The area $Q$ of $\Q$ is given by 
\beqn \label{area_Q}
  Q = ab\sin\gamma = ac\sin\beta = bc\sin\alpha\,.
\eeqn
Let $\C$ be provided with a fixed reference point $O$ inside $\C$ and a fixed oriented line segment $\sigma$ starting in $O$ (see Fig.\:\ref{Q}), and let $\phi$ be the angle between a fixed direction in the plane and segment $\sigma$. The {\em random throw of $\C$ onto $\R$} is defined as follows: The coordinates $x$,\;$y$ of $O$ are random variables uniformly distributed in $[y\cot\alpha,h_b\csc\alpha+y\cot\alpha]$ and $[0,h_c]$, respectively; the angle $\phi$ is a random variable uniformly distributed in $[0,2\pi]$. All three random variables are stochastically independent. Let $u$ denote the perimeter of $\C$, $F$ the area of $\C$, and $s:\RR\rightarrow\RR$, $\phi\mapsto s(\phi)$ the support function of $\C$ with reference to the point~$O$. We use $s$ in the following sense (see Fig.\:\ref{s}): $\C$ rotates about the origin of a Cartesian $x,y$-coordinate system with rotation angle $\phi$ between the $x$-axis and segment $\sigma$, whereby $O$ coincides with the origin. The support line, touching $\C$, is perpendicular to the $x$-axis. Now, the support function $s$ is the distance between the origin and the support line. In \cite[pp.\:2-3]{Santalo2}, the support function, say $\tilde{s}$, is defined for fixed $\C$ and moving support line. Clearly, the relation between these two definitions is given by $s(\phi)=\tilde{s}(-\phi)$ if the fixed body $\C$ is in the position with $\phi=0$, and $O$ coinciding with the origin. The width of $\C$ in the direction $\phi$ is given by $w:\RR\rightarrow\RR$, $\phi\mapsto w(\phi)=s(\phi)+s(\phi+\pi)$.

\begin{figure}[h]
  \begin{center}
	\includegraphics[scale=0.75]{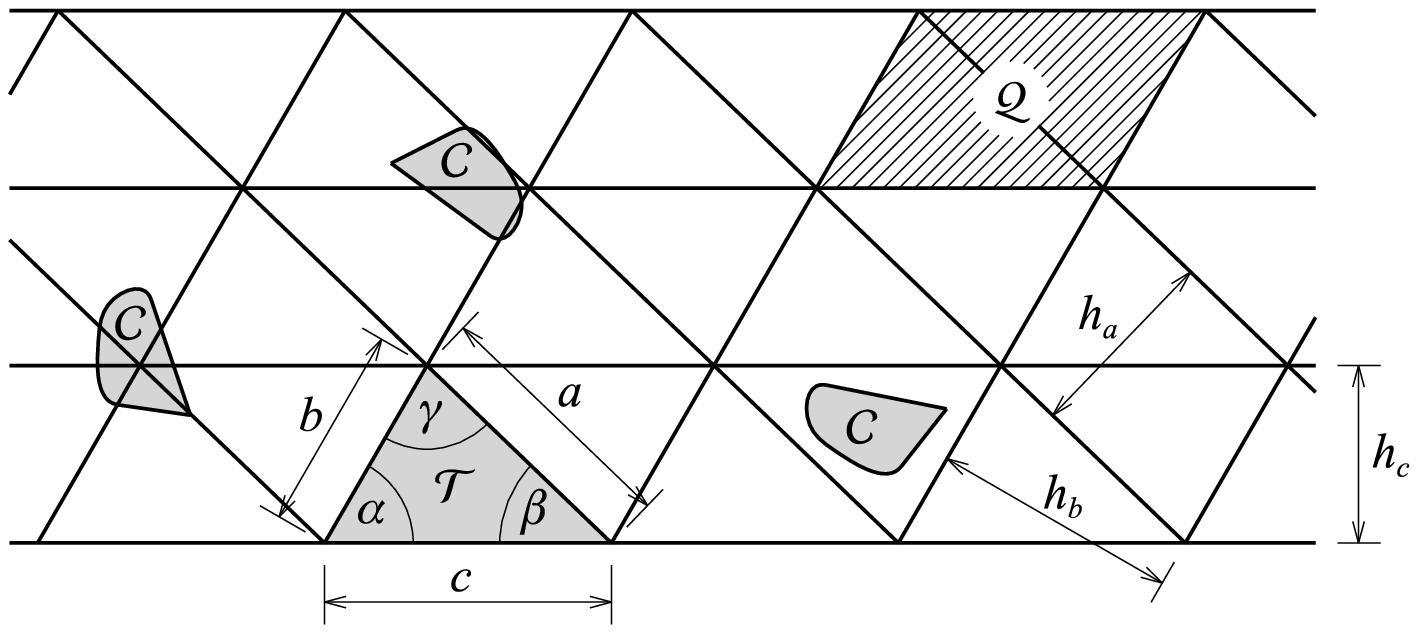}
  \end{center}
  \vspace{-0.4cm}
  \caption{\label{R} Lattice $\R$ and convex body $\C$}
\end{figure}

\noindent 
Markov \cite[pp.\:169-173]{Markov} calculated the probability $p(1)$ that a line segment (needle) of length $\ell$ hits exactly one triangle of $\R$. He found
\beqn \label{p(1)}
  p(1) = 1 + \frac{\ell^2(\alpha a^2+\beta b^2+\gamma c^2)}{2\pi Q^2} 
		- \frac{\ell(4a+4b+4c-3\ell)}{2\pi Q}\,.
\eeqn
Pettineo \cite[pp.\:289-294]{Pettineo} and B\"ottcher \cite[pp.\:3-10]{Boettcher} calculated the probability~$p$ that the needle hits at least one line of $\R$. Clearly, $p=1-p(1)$. Although giving equal results, the formulae of Markov, Pettineo, and B\"ottcher are quite different. Santal\'o \cite[pp.\:167-170]{Santalo1} (see also \cite[pp.\:140-141]{Santalo2}) calculated the probabilities that the needle hits $0$, $1$, $2$ or $3$ lines of the lattice $\mathcal{R}_{a,\,a,\,a}$ of equilateral triangles. Duma and Stoka \cite[pp.\:16-20]{Duma_Stoka2} obtained the probabilities that $\C$ hits at least one line, and exactly three lines of $\R$. Ren and Zhang \cite[pp.\:320-321]{Ren_Zhang} (see also \cite[pp.\:72-73]{Ren}) computed the probabilities for the number of hit lines for a bounded convex body $\C$ with no size restriction, and a lattice of parallelograms. B\"asel and Duma \cite{Baesel_Duma1}, \cite{Baesel_Duma2} calculated the probabilities that $\C$ hits exactly $1$, $2$, $3$ or $4$ parallelograms for two kinds of parallelogram lattices.  

\section{Hitting probabilities}

In the following, we choose $\phi$ as the angle between the direction perpendicular to the sides $b$ and segment $\sigma$ (see Fig.\:\ref{Q}). We assume that for every angle $\phi$, $0\leq\phi<2\pi$, there is a position of the reference point $O$ so that $\C$ with angle $\phi$, denoted as $\C_\phi$, is entirely contained in exactly one triangle $\T$ of $\R$. Therefore, we consider the triangle $\T^*(\phi)$ that is similar to $\T$, and every side of $\T^*(\phi)$ touches $\C_\phi$ (see Fig.\:\ref{T*}). The condition that $\C_\phi$ is entirely contained in $\T$ is equivalent to the fact that $\T^*(\phi)$ is smaller than $\T$. Hence the length of  side $c$ of $\T$ is greater than the length of the respective side $c^*(\phi)$ of $\T^*(\phi)$. Using Fig.\:\ref{T*}, we find
\beq
  c^*(\phi) = s(\phi)\csc\alpha+s(\phi+\alpha+\beta)\csc\beta
				+s(\phi+\alpha+\pi)(\cot\alpha+\cot\beta)\,.
\eeq 
So we have
\beqn \label{max}
  \max_{0\leq\phi<2\pi}c^*(\phi) \leq c\,,
\eeqn
(The equals sign does not influence the calculation of the probabilities.) The radius $\rho$ of the incircle of $\T$ is given by $\rho=Q/(a+b+c)$ \cite[p.\:6]{Goehler}. Clearly, if $\max_{\,0\leq\phi<\pi} w(\phi)\leq 2\rho$ (cf.\:\cite[p.\:16]{Duma_Stoka2}), then condition \eqref{max} always holds true.

\begin{figure}[h] 
\begin{minipage}[h]{5cm}
  \vspace{0.82cm}
  \begin{center}
	\includegraphics[scale=0.85]{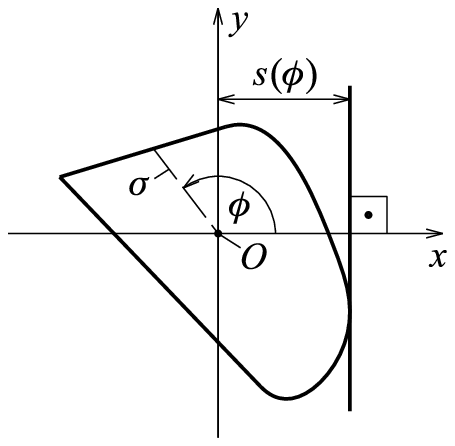}
	\vspace{0.7cm}
	\caption{Support function $s$}
	\label{s}   
  \end{center}
\end{minipage}
\begin{minipage}[h]{7cm}
  \centering
	\includegraphics[scale=0.85]{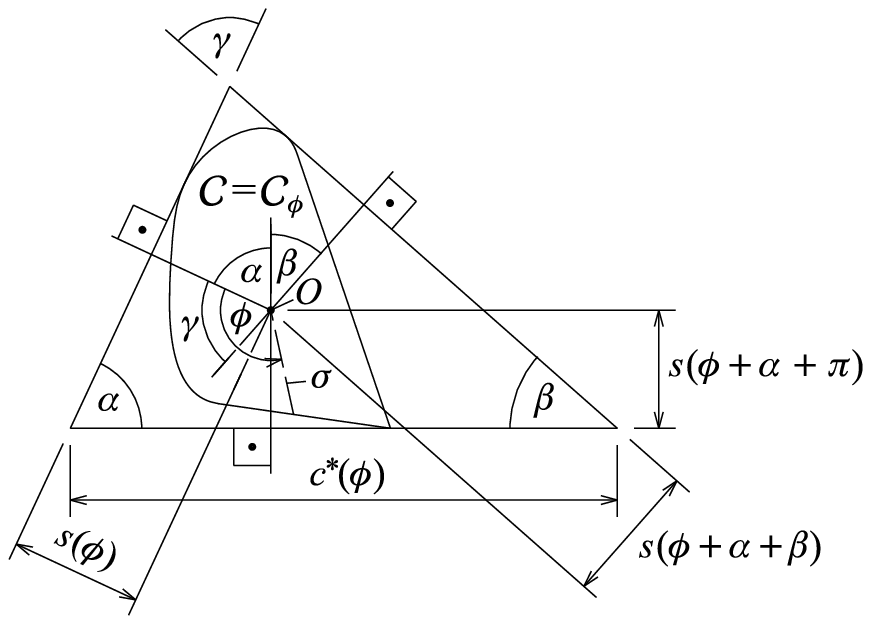}
	\caption{Triangle $\T^*(\phi)$}
	\label{T*}
\end{minipage}
\end{figure}

\begin{theorem} \label{Thm1}
If condition \eqref{max} holds true, the probabilities $p(i)$ that $\C$ hits exactly $i$ triangles of $\R$ are given by
\begin{align*}
  p(1) = {} & 1 - \frac{(a+b+c)u}{\pi Q} + \frac{(a^2+b^2+c^2)J(0)}{2\pi Q^2}
		     + \frac{bcf_1(\alpha)+caf_1(\beta)+abf_1(\gamma)}{\pi Q^2}\,,\\[0.15cm]
  p(2) = {} & \frac{(a+b+c)u}{\pi Q} - \frac{3(a^2+b^2+c^2)J(0)}{2\pi Q^2}
		     - \frac{bcf_2(\alpha)+caf_2(\beta)+abf_2(\gamma)}{\pi Q^2}\,,
				\displaybreak[0]\\[0.15cm]
  p(3) = {} & \frac{3(a^2+b^2+c^2)J(0)}{2\pi Q^2}
		     + \frac{bcf_3(\alpha)+caf_3(\beta)+abf_3(\gamma)}{\pi Q^2}\,,
				\displaybreak[0]\\[0.15cm]
  p(4) = {} & \frac{bcJ(\alpha)+caJ(\beta)+abJ(\gamma)}{\pi Q^2}
		     - \frac{(a^2+b^2+c^2)J(0)}{2\pi Q^2} - \frac{F}{Q}\,,\\[0.15cm]
  p(5) = {} & 0 \,,\quad p(6) = \frac{F}{Q}\,,                      
\end{align*}
with
\begin{align*}
  f_1(x) = {} & I(x)-J(x) \,,\quad f_2(x) = 2I(x)-3J(x) \,,\quad
		       f_3(x) = I(x)-3J(x)\,,\displaybreak[0]\\[0.2cm]
  I(x)\: = {} & \int_0^\pi w(\phi)w(\phi+x)\,\dd\phi \,,\quad
  J(x) = \int_0^{2\pi}s(\phi)s(\phi+x)\,\dd\phi\,,
\end{align*}
and the expectation for the random number $Z$ of hit triangles by
\beq
  \E = 1+\frac{(a+b+c)u}{\pi Q}+\frac{2F}{Q}\,.
\eeq
\end{theorem}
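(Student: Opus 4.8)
The plan is to reduce the random variable $Z$ to a purely combinatorial count and then evaluate the few integral--geometric quantities that arise. First I would show that, under \eqref{max},
\[
  Z = 1 + L + 2V,
\]
where $L$ is the number of lattice lines of $\R$ meeting $\C$ and $V$ is the number of lattice vertices lying inside $\C$. Since each triangle meets the convex body in a convex, hence connected, piece, $Z$ equals the number of regions into which the chords $\R\cap\C$ cut $\C$. The decisive incidence fact is that the three families intersect pairwise \emph{exactly} at the common vertices of $\R$: the lines of $\Rhb$ and $\Rhc$ meet in a point lattice of cell area $h_bh_c/\sin\alpha=Q$, which is precisely the area per vertex, and through every vertex passes one line of each family (the six triangles around a vertex have angles $\alpha,\beta,\gamma,\alpha,\beta,\gamma$). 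Hence every interior crossing is a triple point. Counting regions by adding the chords one at a time (a chord meeting the earlier ones in $k$ interior points adds $k+1$ regions, and the only interior meetings are the triple vertices, so a vertex inside $\C$ yields a net $+2$) gives the identity. Condition \eqref{max} forces $L\le3$ and $V\le1$, with $V=1\Rightarrow L=3$; thus $Z\in\{1,2,3,4,6\}$, so $p(5)=0$ at once, and $p(1)=P(L{=}0)$, $p(2)=P(L{=}1)$, $p(3)=P(L{=}2)$, $p(6)=P(V{=}1)$, $p(4)=P(L{=}3)-P(V{=}1)$.

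Next I would evaluate the building blocks. The vertex probability is $P(V{=}1)=\mathrm{E}(V)=F/Q$ by the point--density argument (density $1/Q$, and $\C$ too small for two vertices), giving $p(6)=F/Q$. For a single family, Cauchy's formula $\int_0^\pi w\,\dd\phi=u$ gives $P_x=u/(\pi h_x)=xu/(\pi Q)$, so $S_1:=P_a+P_b+P_c=(a+b+c)u/(\pi Q)$. For a pair I would use the perpendicular offsets $(t_b,t_c)$ to $\Rhb,\Rhc$: the reference point is uniform on the torus $[0,h_b)\times[0,h_c)$, the two crossing events are independent strip conditions of lengths $w(\phi)$ and $w(\phi+\alpha)$, and averaging over $\phi$ with $\int_0^{2\pi}w(\phi)w(\phi+\alpha)\,\dd\phi=2I(\alpha)$ gives $P_{bc}=bc\,I(\alpha)/(\pi Q^2)$ and cyclically, so $S_2=(bcI(\alpha)+caI(\beta)+abI(\gamma))/(\pi Q^2)$. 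Writing the distribution of $L$ through inclusion--exclusion in the three crossing events then gives $p(1)=1-S_1+S_2-S_3$, $p(2)=S_1-2S_2+3S_3$, $p(3)=S_2-3S_3$ and $p(4)=S_3-F/Q$, where $S_3=P(L{=}3)$; substituting $f_1=I-J$, $f_2=2I-3J$, $f_3=I-3J$ reproduces the stated forms provided $S_3=(bcJ(\alpha)+caJ(\beta)+abJ(\gamma))/(\pi Q^2)-(a^2+b^2+c^2)J(0)/(2\pi Q^2)$. The expectation then drops out immediately: $\E=1+\mathrm{E}(L)+2\,\mathrm{E}(V)=1+(a+b+c)u/(\pi Q)+2F/Q$.

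The main obstacle is the triple--crossing probability $S_3$. Here the three perpendicular offsets $t_a,t_b,t_c$ obey one linear relation, so the three strip conditions live on the two--dimensional position torus and are genuinely dependent; $S_3$ is the area of an intersection of three strips, and --- crucially --- the two orientations of lattice triangle (up and down) enter asymmetrically, so the \emph{oriented} support function $s(\phi)$ rather than the width $w(\phi)$ governs the corner overlaps, which is exactly why $J$ (an integral over $[0,2\pi]$ in $s$) appears in place of $I$. This is where the circumscribed similar triangle $\T^*(\phi)$ of the excerpt is the right instrument: for fixed $\phi$ the set of positions with $\C_\phi$ contained in a lattice triangle is the inner similar triangle whose $c$--side has length $c-c^*(\phi)$, so its area is $\tfrac12 Q\,(1-c^*(\phi)/c)^2$, and the analogous offset regions read off the position--areas for $Z=2,3,4$. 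Expanding these quadratics in the support values comprising $c^*(\phi)$ and integrating over $\phi\in[0,2\pi]$ produces the diagonal term $J(0)=\int_0^{2\pi}s^2\,\dd\phi$ together with the off--diagonal $J(\alpha),J(\beta),J(\gamma)$, and feeding the result through the relations above closes the proof. I expect the careful bookkeeping of these corner overlaps, and the correct pairing of $s(\phi)$ with $s(\phi+\alpha)$ and the orientation signs, to be the only delicate part; everything else is routine.
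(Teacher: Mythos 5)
Your argument is correct, and it is organized quite differently from the paper's. The paper works conditionally on $\phi$ and reads the areas $F_i(\phi)$ of the position sets for each $i$ directly off the fundamental parallelogram (set~6 a congruent copy of $\C$, set~3 six corner triangles similar to $\T$, set~4 a rhombus minus corner triangles minus $\C$, etc.), expresses each linear dimension through the support function, and then symmetrizes over the three sides; $p(2)$ is handled by splitting over which side is crossed, and the expectation by summing $\sum i\,p(i)$. You instead reduce everything to the two counting variables $L$ (lines hit) and $V$ (vertices covered) via the region-count identity $Z=1+L+2V$, which is valid here because every interior crossing of lattice lines is a triple point at a vertex and because \eqref{max} forces at most one line per family (the width of $\C$ perpendicular to each family is at most the corresponding altitude) and at most one covered vertex, with $V=1\Rightarrow L=3$. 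The distribution of $L$ then follows from Bonferroni/inclusion--exclusion in the three per-family crossing events. This buys several things the paper obtains only by computation: $\E=1+\mathrm{E}(L)+2\,\mathrm{E}(V)$ is immediate by linearity; the coefficient patterns in $f_1,f_2,f_3$ are explained rather than emerging from area bookkeeping; and $S_1$, $S_2$, $p(6)$ come from standard one- and two-family Buffon arguments (Cauchy's formula, independence of the two perpendicular offsets on the torus, vertex density $1/Q$). The two proofs necessarily meet at the triple-crossing probability $S_3$: your plan --- compute $P(L=0\,|\,\phi)$ as the area $\tfrac{Q}{2}\big[(1-c^{*}(\phi)/c)^2+(1-c^{*}(\phi+\pi)/c)^2\big]$ of the two erosion triangles (one per orientation of lattice triangle), expand the squares of the support-function combinations, and solve for $S_3$ --- is essentially the computation the paper performs for $F_1(\phi)$ and $H(x,y)$, including the point you correctly flag that the two orientations force the use of $s$ rather than $w$. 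That expansion is the one step you have not written out; it is routine but lengthy, and it is exactly where the identity $\int_0^{2\pi}s(\phi+\pi)\,s(\phi+x)\,\dd\phi=I(x)-J(x)$ is needed to turn the cross terms into the stated $J$'s and $f_i$'s.
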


\begin{proof}
It is sufficient to consider only the cases where $O\in\Q$. (It would even be sufficient to consider only the cases where $O\in\T$. But considering $O\in\Q$ is much more convenient.) Since $\C$ is convex, it follows that $p(5)=0$. For fixed value of $\phi$, $\C$ hits exactly $i\in\{1,2,3,4,6\}$ parallelograms of $\R$ if $O$ is inside the set with number $i$ (see Fig.\:\ref{Q}), where every set $i$ is the union of all sets with equal shading. In the following, we use the law of total probability in the form
\beq
  p(i) = \int_0^{2\pi}p(i\,|\,\phi)\;\frac{\dd\phi}{2\pi}\,,
\eeq
where $p(i\,|\,\phi)$ denotes the conditional probability of exactly $i$ hits for fixed value of $\phi$. We have
\beq
  p(i\,|\,\phi) = \frac{F_i(\phi)}{Q}\,,
\eeq
where $F_i(\phi)$ is the area of set $i$. By rearranging the parallelogram $\Q$, one gets the parallelogram shown in Fig.\:\ref{Qnew}. With the help of this figure it is easier to find and calculate the areas $F_i(\phi)$.

\begin{figure}[h]
  \begin{center}
	\includegraphics[scale=0.86]{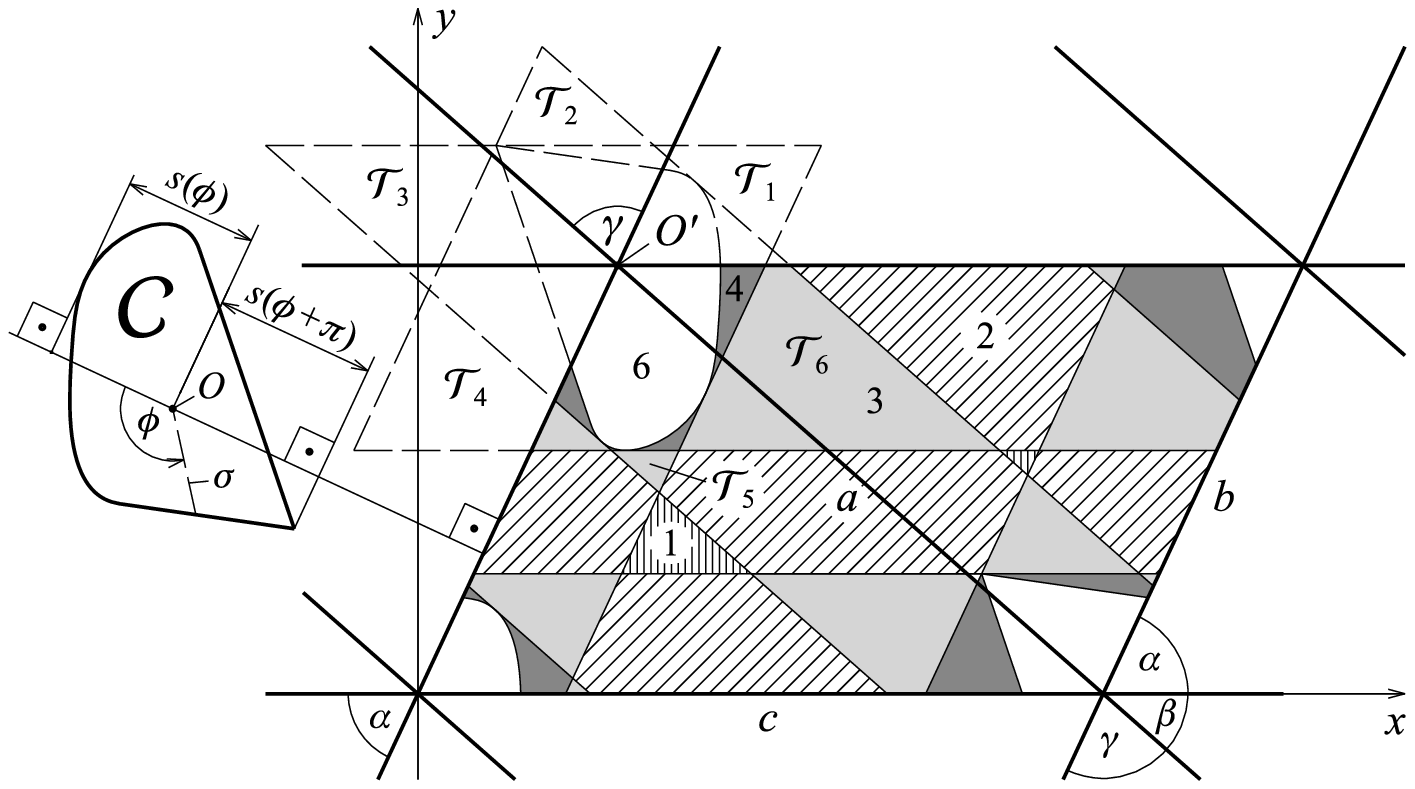}
  \end{center}
  \vspace{-0.4cm}
  \caption{\label{Q} Situation in $\Q$ for fixed value of the angle $\phi$}
  \vspace{1.3cm}
  \begin{center}
	\includegraphics[scale=0.95]{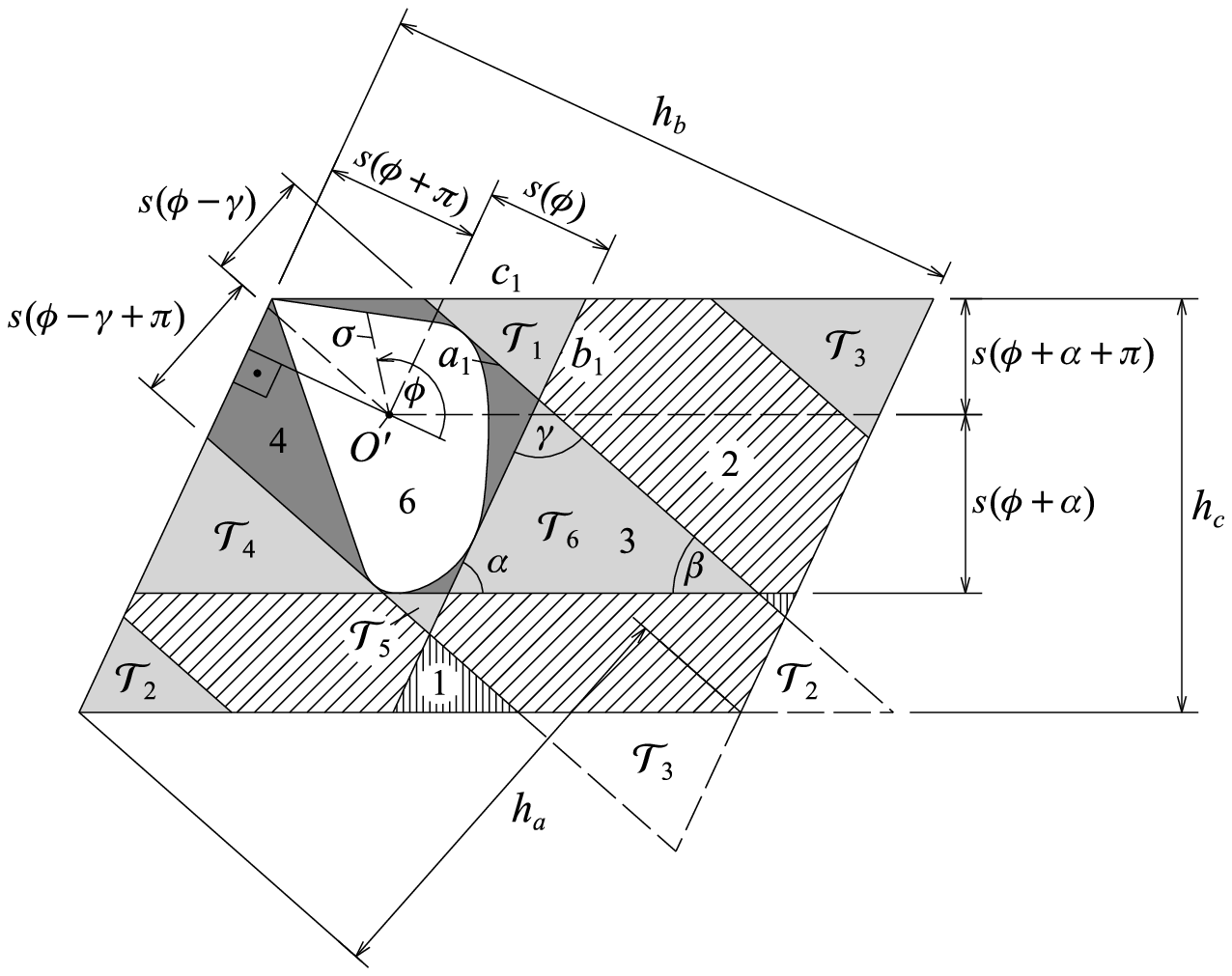}
  \end{center}
  \vspace{-0.4cm}
  \caption{\label{Qnew} Rearrangement of $\Q$}
\end{figure}

\clearpage

\noindent
For every $\phi$, $F_6(\phi)$ is equal to $F$ since set~6 is a congruent copy of $\C$ (see Fig.\:\ref{Q} and \ref{Qnew}), hence
\beq
  p(6) = \frac{1}{2\pi Q}\int_0^{2\pi}F\,\dd\phi
       = \frac{F}{2\pi Q}\int_0^{2\pi}\dd\phi
       = \frac{F}{Q}
\eeq
(cf. \cite[p.\:321]{Ren_Zhang}, \cite[pp.\:73-74]{Ren}). Now, we determine the probability $p(3)$. $\C$, with direction $\phi$, hits exactly three triangles of $\R$ if $C\in\T_1(\phi)\cup\T_2(\phi)\cup\ldots\cup\T_6(\phi)$. Every triangle $\T_i(\phi)$ is similar to the triangle $\T$. We denote by $a_i(\phi)$, $b_i(\phi)$, $c_i(\phi)$ the sides of $\T_i(\phi)$ with opposite angles $\alpha$, $\beta$, $\gamma$, respectively. Because of the said similarity, the area of $\T_i(\phi)$ is given by    
\beq
  \frac{a_i^2(\phi)}{a^2}\,\frac{Q}{2} 
	= \frac{b_i^2(\phi)}{b^2}\,\frac{Q}{2}
	= \frac{c_i^2(\phi)}{c^2}\,\frac{Q}{2}\,,
\eeq
and, therefore,
\beq
  F_3(\phi) = \frac{Q}{2}\left(
	\frac{a_1^2(\phi)}{a^2}+\frac{c_2^2(\phi)}{c^2}+\frac{b_3^2(\phi)}{b^2} +
	\frac{a_4^2(\phi)}{a^2}+\frac{c_5^2(\phi)}{c^2}+\frac{b_6^2(\phi)}{b^2}\right),
\eeq
hence
\begin{align*}
  p(3)
  = {} & \frac{1}{4\pi}\bigg(
		\frac{1}{a^2}\int_0^{2\pi}a_1^2(\phi)\,\dd\phi +	
		\frac{1}{c^2}\int_0^{2\pi}c_2^2(\phi)\,\dd\phi +	
		\frac{1}{b^2}\int_0^{2\pi}b_3^2(\phi)\,\dd\phi\\
       &	\;\;\;+\frac{1}{a^2}\int_0^{2\pi}a_4^2(\phi)\,\dd\phi +	
		\frac{1}{c^2}\int_0^{2\pi}c_5^2(\phi)\,\dd\phi +	
		\frac{1}{b^2}\int_0^{2\pi}b_6^2(\phi)\,\dd\phi\bigg)\,.	
\end{align*}
Using the support function $s$ of $\C$, one finds
\beq
  a_1(\phi) 
	= s(\phi-\beta-\gamma)\csc\beta+s(\phi)\csc\gamma-s(\phi-\gamma)(\cot\beta+\cot\gamma)\,.
\eeq
We put $d(\beta,\gamma,\phi):=a_1(\phi)$. Due to the symmetry of $\R$,
\beq
 \begin{array}{l@{\;=\;}l@{\quad}l@{\;=\;}l}
  c_2(\phi) & d(\alpha,\beta,\phi-\gamma)\,, &
  b_3(\phi) & d(\gamma,\alpha,\phi-\gamma-\beta)\,,\\[0.11cm]
  a_4(\phi) & d(\beta,\gamma,\phi+\alpha+\beta+\gamma)\,, &
  c_5(\phi) & d(\alpha,\beta,\phi+\alpha+\beta)\,,\\[0.13cm]
  b_6(\phi) & d(\gamma,\alpha,\phi+\alpha)\,. 
 \end{array}
\eeq
Since $d(x,y,\,\cdot\,)$ with $(x,y)\in\{(\alpha,\beta),(\beta,\gamma),(\gamma,\alpha)\}$ is a $2\pi$-periodic function, we get
\beq
 \begin{array}{c@{\;=\;}c@{\;=\;}c}
  \ds{\int_0^{2\pi}a_1^2(\phi)\,\dd\phi} &
  \ds{\int_0^{2\pi}d^2(\beta,\gamma,\phi)\,\dd\phi} & 
  \ds{\int_0^{2\pi}a_4^2(\phi)\,\dd\phi\,,}\\[0.35cm]
  \ds{\int_0^{2\pi}b_3^2(\phi)\,\dd\phi} &
  \ds{\int_0^{2\pi}d^2(\gamma,\alpha,\phi)\,\dd\phi} & 
  \ds{\int_0^{2\pi}b_6^2(\phi)\,\dd\phi\,,}\\[0.35cm]  
  \ds{\int_0^{2\pi}c_2^2(\phi)\,\dd\phi} &
  \ds{\int_0^{2\pi}d^2(\alpha,\beta,\phi)\,\dd\phi} & 
  \ds{\int_0^{2\pi}c_5^2(\phi)\,\dd\phi\,,}  
 \end{array}
\eeq
hence
\begin{align*}
  p(3) = {} & \frac{1}{2\pi}\bigg(\frac{H(\beta,\gamma)}{a^2} + 
		\frac{H(\gamma,\alpha)}{b^2} + \frac{H(\alpha,\beta)}{c^2}\bigg)\,,
\end{align*}
where
\beq
  H(x,y):=\int_0^{2\pi}d^2(x,y,\phi)\,\dd\phi\,.
\eeq
With
\begin{align*}
  & \!\!\!\!\!\!\!
  d^2(\beta,\gamma,\phi+\beta+\gamma)\\
  = {} & s^2(\phi)\csc^2\beta+s^2(\phi+\beta+\gamma)\csc^2\gamma
		+s^2(\phi+\beta)(\cot\beta+\cot\gamma)^2\\
       & +2s(\phi)s(\phi+\beta+\gamma)\csc\beta\csc\gamma
		-2s(\phi)s(\phi+\beta)\csc\beta\,(\cot\beta+\cot\gamma)\\
       &	-2s(\phi+\beta+\gamma)s(\phi+\beta)\csc\gamma\,(\cot\beta+\cot\gamma)\,,
\end{align*}
and, using the the relations 
\begin{align*}
  & \!\!\!\!\!\!\!\!\!\!\!\!\!\!\!\!\!\!\!\!\!\!\!\!\!\!\!\!\!\!\!\!\!\!\!\!\!
    \!\!\!\!\!\!\!\!\!\!\!\!\!\!\!\!\!\!\!\!\!\!\!\!\!\!\!\!\!\!\!\!\!\!\!\!\!	
    \int_0^{2\pi}s^2(\phi+\beta)\:\dd\phi 
    = \int_0^{2\pi}s^2(\phi+\beta+\gamma)\:\dd\phi
    = \int_0^{2\pi}s^2(\phi)\:\dd\phi\,,\\[0.2cm]
  & \!\!\!\!\!\!\!\!\!\!\!\!\!\!\!\!\!\!\!\!\!\!\!\!\!\!\!\!\!\!\!\!\!\!\!\!\!
    \!\!\!\!\!\!\!\!\!\!\!\!\!\!\!\!\!\!\!\!\!\!\!\!\!\!\!\!\!\!\!\!\!\!\!\!\!
    \int_0^{2\pi}s(\phi+\beta)s(\phi+\beta+\gamma)\,\dd\phi
    = \int_0^{2\pi}s(\phi)s(\phi+\gamma)\,\dd\phi\,,\\[0.2cm]
    \int_0^{2\pi}s(\phi)\,s(\phi+\beta+\gamma)\:\dd\phi
  = {} & \int_0^{2\pi}s(\phi+\alpha)\,s(\phi+\alpha+\beta+\gamma)\:\dd\phi\\
  = {} & \int_0^{2\pi}s(\phi+\pi)\,s(\phi+\alpha)\:\dd\phi\,,
\end{align*}  
that result from the $2\pi$-periodicity of $s$, one finds
\begin{align*}
  H(\beta,\gamma) 
  = {} & \int_0^{2\pi}d^2(\beta,\gamma,\phi)\:\dd\phi
  = \int_0^{2\pi}d^2(\beta,\gamma,\phi+\beta+\gamma)\:\dd\phi\displaybreak[0]\\ 
  = {} & \left[\csc^2\beta+\csc^2\gamma+(\cot\beta+\cot\gamma)^2\right]
		\int_0^{2\pi}s^2(\phi)\:\dd\phi\\
       & +2\csc\beta\csc\gamma\int_0^{2\pi}s(\phi+\pi)\,s(\phi+\alpha)\:\dd\phi\\
       & -2(\cot\beta+\cot\gamma)\csc\beta\int_0^{2\pi}s(\phi)\,s(\phi+\beta)\:\dd\phi\\
       & -2(\cot\beta+\cot\gamma)\csc\gamma\int_0^{2\pi}s(\phi)\,s(\phi+\gamma)\:\dd\phi\,.
\end{align*}
From
\begin{align*}
  & \!\!\!\!\!\!\! \int_0^{2\pi}w(\phi)\,w(\phi+\alpha)\:\dd\phi\\[0.1cm]
  = {} & \int_0^{2\pi}[s(\phi)+s(\phi+\pi)][s(\phi+\alpha)
		+s(\phi+\alpha+\pi)]\:\dd\phi\displaybreak[0]\\[0.1cm]
  = {} & \int_0^{2\pi}[s(\phi)\,s(\phi+\alpha)+s(\phi)\,s(\phi+\alpha+\pi)
		+s(\phi+\pi)\,s(\phi+\alpha)\\[0.1cm]
       & +s(\phi+\pi)\,s(\phi+\alpha+\pi)]\:\dd\phi\\[0.1cm]
  = {} & 2\int_0^{2\pi}s(\phi)\,s(\phi+\alpha)\:\dd\phi 
		+2\int_0^{2\pi}s(\phi+\pi)\,s(\phi+\alpha)\:\dd\phi\,,
\end{align*}
we get, with the $\pi$-periodicity of $w$,
\begin{align*}
  & \!\!\!\!\!\!\! \int_0^{2\pi}s(\phi+\pi)\,s(\phi+\alpha)\:\dd\phi\\[0.1cm]
  = {} & \frac{1}{2}\int_0^{2\pi}w(\phi)\,w(\phi+\alpha)\:\dd\phi
 		-\int_0^{2\pi}s(\phi)\,s(\phi+\alpha)\:\dd\phi\\[0.1cm]
  = {} & \int_0^{\pi}w(\phi)\,w(\phi+\alpha)\:\dd\phi
 		-\int_0^{2\pi}s(\phi)\,s(\phi+\alpha)\:\dd\phi\,.
\end{align*}
Therefore, with the abbreviations
\beq
  I(x):=\int_0^{\pi}w(\phi)\,w(\phi+\alpha)\:\dd\phi \quad\mbox{and}\quad
  J(x):=\int_0^{2\pi}s(\phi)\,s(\phi+\alpha)\:\dd\phi\,,
\eeq
we find
\begin{align*}
  \frac{H(\beta,\gamma)}{a^2}
  = {} & \frac{1}{a^2}\int_0^{2\pi}d^2(\beta,\gamma,\phi)\:\dd\phi
  = \frac{1}{a^2}\int_0^{2\pi}d^2(\beta,\gamma,\phi+\beta+\gamma)\:\dd\phi\\[0.2cm] 
  = {} & \left[\csc^2\beta+\csc^2\gamma+(\cot\beta+\cot\gamma)^2\right]J(0)\\[0.1cm]
       & +2\csc\beta\csc\gamma\,[I(\alpha)-J(\alpha)]
		-2(\cot\beta+\cot\gamma)\csc\beta\,J(\beta)\\[0.1cm]
       &	-2(\cot\beta+\cot\gamma)\csc\gamma\,J(\gamma)\displaybreak[0]\\[0.1cm]
  = {} & \left[\dfrac{c^2}{(ac\sin\beta)^2}
			+\dfrac{b^2}{(ab\sin\gamma)^2}+\left(\dfrac{c\cos\beta}{ac\sin\beta}
			+\dfrac{b\cos\gamma}{ab\sin\gamma}\right)^2\right]J(0)\\[0cm]
       & +\dfrac{2bc}{ac\sin\beta\,ab\sin\gamma}\,I(\alpha)
			-\dfrac{2bc}{ac\sin\beta\,ab\sin\gamma}\,J(\alpha)\\[0.1cm]
       & -2\left(\dfrac{c\cos\beta}{ac\sin\beta}+\dfrac{b\cos\gamma}{ab\sin\gamma}\right)
			\dfrac{c}{ac\sin\beta}\,J(\beta)\\[0.1cm]
       & -2\left(\dfrac{c\cos\beta}{ac\sin\beta}+\dfrac{b\cos\gamma}{ab\sin\gamma}\right)
			\dfrac{b}{ab\sin\gamma}\,J(\gamma)\\[0.2cm]
  = {} & \dfrac{(a^2+b^2+c^2)J(0)}{Q^2}+\dfrac{2bcI(\alpha)}{Q^2}-\dfrac{2bcJ(\alpha)}{Q^2}
			-\dfrac{2acJ(\beta)}{Q^2}-\dfrac{2abJ(\gamma)}{Q^2}\,. 	
\end{align*}
It follows that
\begin{align*}
  p(3)
  = {} & \frac{1}{2\pi Q^2}\big[(a^2+b^2+c^2)J(0)+2bcI(\alpha)-2bcJ(\alpha)-2caJ(\beta)
			-2abJ(\gamma)\\[-0.12cm]
       & \quad\;\;\, +(b^2+c^2+a^2)J(0)+2caI(\beta)-2caJ(\beta)\:\!\!
			-2abJ(\gamma)-2bcJ(\alpha)\\[0.12cm]
       & \quad\;\;\, +(c^2+a^2+b^2)J(0)+2abI(\gamma)-2abJ(\gamma)
			-2bcJ(\alpha)-2caJ(\beta)]\\[0.15cm]
  = {} & \frac{3(a^2+b^2+c^2)J(0)}{2\pi Q^2}+\frac{bcI(\alpha)+caI(\beta)+abI(\gamma)}{\pi Q^2}\\
       & -\frac{3\left[bcJ(\alpha)+caJ(\beta)+abJ(\gamma)\right]}{\pi Q^2}\,. 
\end{align*}
As the next step, we determine $p(4)$. Using Fig.\:\ref{Qnew}, one sees that
\begin{align*}
  F_4(\phi) 
  = {} & \frac{w(\phi)w(\phi+\alpha)}{\sin\alpha}-\frac{a_1^2(\phi)}{a^2}\,\frac{Q}{2}
		-\frac{a_4^2(\phi)}{a^2}\,\frac{Q}{2}-F\\[0.1cm]
  = {} & \frac{bcw(\phi)w(\phi+\alpha)}{Q}-\frac{a_1^2(\phi)}{a^2}\,\frac{Q}{2}
		-\frac{a_4^2(\phi)}{a^2}\,\frac{Q}{2}-F\,.
\end{align*}
So we find
\begin{align*}
  p(4)
  = {} & \frac{1}{2\pi Q}\bigg[\frac{bc}{Q}\int_0^{2\pi}w(\phi)w(\phi+\alpha)\,\dd\phi
		-\frac{Q}{2a^2}\bigg(\int_0^{2\pi}a_1^2(\phi)\,\dd\phi\\
       &	+\int_0^{2\pi}a_4^2(\phi)\,\dd\phi\bigg)-F\int_0^{2\pi}\dd\phi\bigg]\displaybreak[0]\\
  = {} & \frac{1}{2\pi Q}\bigg[\frac{2bcI(\alpha)}{Q}
		-\frac{Q}{a^2}\int_0^{2\pi}d^2(\beta,\gamma,\phi)\,\dd\phi-2\pi F\bigg]\\
  = {} & \frac{bcI(\alpha)}{\pi Q^2}-\frac{H(\beta,\gamma)}{2\pi a^2}-\frac{F}{Q}\,.
\end{align*}
Taking into account the symmetry of $\R$, we also have
\begin{align*}
  p(4)
  = {} & \frac{1}{3}\bigg[
		\bigg(\frac{bcI(\alpha)}{\pi Q^2}-\frac{H(\beta,\gamma)}{2\pi a^2}-\frac{F}{Q}\bigg)
        +\bigg(\frac{caI(\beta)}{\pi Q^2}-\frac{H(\gamma,\alpha)}{2\pi b^2}-\frac{F}{Q}
			\bigg)\\[0.1cm]
       &+\bigg(\frac{abI(\gamma)}{\pi Q^2}-\frac{H(\alpha,\beta)}{2\pi c^2}-\frac{F}{Q}
			\bigg)\bigg]\\[0.15cm]
  = {} & \frac{bcI(\alpha)+caI(\beta)+abI(\gamma)}{3\pi Q^2}-\frac{1}{3}\,p(3)-\frac{F}{Q}\\[0.1cm]
  = {} & \frac{bcI(\alpha)+caI(\beta)+abI(\gamma)}{3\pi Q^2}-\frac{1}{3}\bigg(
		\frac{3(a^2+b^2+c^2)J(0)}{2\pi Q^2}\\[0.1cm]
       & +\frac{bcI(\alpha)+caI(\beta)+abI(\gamma)}{\pi Q^2}
		-\frac{3[bcJ(\alpha)+caJ(\beta)+abJ(\gamma)]}{\pi Q^2}\bigg)-\frac{F}{Q}\\
  = {} & \frac{bcJ(\alpha)+caJ(\beta)+abJ(\gamma)}{\pi Q^2}-\frac{(a^2+b^2+c^2)J(0)}{2\pi Q^2}
		-\frac{F}{Q}\,.
\end{align*}
The next probability to determine is $p(1)$. One finds
\begin{align*}
  F_1(\phi)
  = {} & \frac{[h_b-w(\phi)][h_c-w(\phi+\alpha)]}{\sin\alpha}-\bigg(\frac{[h_c-w(\phi+\alpha)]
		w(\phi-\gamma)}{\sin\beta}\\
       & -\frac{c_2^2(\phi)}{c^2}\frac{Q}{2}-\frac{c_5^2(\phi)}{c^2}\frac{Q}{2}\bigg)\\
  = {} & \frac{h_bh_c}{\sin\alpha}-\frac{h_bw(\phi+\alpha)}{\sin\alpha}-\frac{h_cw(\phi)}{\sin\alpha}
		+\frac{w(\phi)w(\phi+\alpha)}{\sin\alpha}-\frac{h_cw(\phi-\gamma)}{\sin\beta}\\
       & +\frac{w(\phi+\alpha)w(\phi-\gamma)}{\sin\beta}+\frac{Q}{2c^2}\left[
		d^2(\alpha,\beta,\phi-\gamma)+d^2(\alpha,\beta,\phi+\alpha+\beta)\right]\,.
\end{align*}
Using $h_b=a\sin\gamma=c\sin\alpha$, $h_c=b\sin\alpha=a\sin\beta$, $w(\phi-\gamma)=w(\phi+\alpha+\beta)$, and $Q=ab\sin\gamma$ we get
\begin{align*}
  F_1(\phi)
  = {} & Q-cw(\phi+\alpha)-bw(\phi)+\frac{bc}{Q}\,w(\phi)w(\phi+\alpha)-aw(\phi+\alpha+\beta)
			\displaybreak[0]\\[-0.05cm]
       &	+\frac{ac}{Q}\,w(\phi+\alpha)w(\phi+\alpha+\beta)+\frac{Q}{2c^2}
			\left[d^2(\alpha,\beta,\phi-\gamma)\right.\displaybreak[0]\\[0.12cm]
       & \left.+\:d^2(\alpha,\beta,\phi+\alpha+\beta)\right]. 
\end{align*}
This, with
\begin{align*}
  \int_0^\pi w(\phi)\,\dd\phi
  = {} & \int_0^\pi [s(\phi)+s(\phi+\pi)]\,\dd\phi
  = \int_0^{2\pi}s(\phi)\,\dd\phi = u
\end{align*}
(see \cite[p.\:3]{Santalo2}), and
\begin{align*}
  \int_0^{2\pi}w(\phi+\alpha)w(\phi+\alpha+\beta)\,\dd\phi
  = {} & 2\int_0^{\pi}w(\phi+\alpha)w(\phi+\alpha+\beta)\,\dd\phi\\
  = {} & 2\int_0^{\pi}w(\phi)w(\phi+\beta)\,\dd\phi\,,
\end{align*}
gives
\begin{align*}
  p(1)
  = {} & \frac{1}{2\pi Q}\bigg(Q\int_0^{2\pi}\dd\phi-2(a+b+c)\int_0^{\pi}w(\phi)\,\dd\phi\\
       &	+\frac{2bc}{Q}\int_0^{\pi}w(\phi)w(\phi+\alpha)\,\dd\phi
		+\frac{2ac}{Q}\int_0^{\pi}w(\phi)w(\phi+\beta)\,\dd\phi\\
       & +\frac{Q}{c^2}\int_0^{2\pi}d^2(\alpha,\beta,\phi)\,\dd{\phi}\bigg)\\
  = {} & 1-\frac{(a+b+c)u}{\pi Q}+\frac{bcI(\alpha)}{\pi Q^2}+\frac{caI(\beta)}{\pi Q^2}
			+\frac{H(\alpha,\beta)}{2\pi c^2}\,.
\end{align*}
We now obtain, with the symmetry of $\R$,
\begin{align*}
  p(1)
  = {} & \frac{1}{3}\bigg(1-\frac{(a+b+c)u}{\pi Q}+\frac{bcI(\alpha)}{\pi Q^2}
		+\frac{caI(\beta)}{\pi Q^2}+\frac{H(\alpha,\beta)}{2\pi c^2}
			\displaybreak[0]\\
       & \:+1-\frac{(b+c+a)u}{\pi Q}+\frac{caI(\beta)}{\pi Q^2}+\frac{abI(\gamma)}{\pi Q^2}
		+\frac{H(\beta,\gamma)}{2\pi a^2}\displaybreak[0]\\
       & \;+1-\frac{(c+a+b)u}{\pi Q}+\frac{abI(\gamma)}{\pi Q^2}+\frac{bcI(\alpha)}{\pi Q^2}
		+\frac{H(\gamma,\alpha)}{2\pi b^2}\bigg)\\
  = {} & 1-\frac{(a+b+c)u}{\pi Q}+\frac{2[bcI(\alpha)+caI(\beta)+abI(\gamma)]}{3\pi Q^2}
		+\frac{1}{3}\,p(3)\\
  = {} & 1-\frac{(a+b+c)u}{\pi Q}+\frac{(a^2+b^2+c^2)J(0)}{2\pi Q^2}
		+\frac{bcI(\alpha)+caI(\beta)+abI(\gamma)}{\pi Q^2}\\
       & -\frac{bcJ(\alpha)+caJ(\beta)+abJ(\gamma)}{\pi Q^2}\,.
\end{align*}
Now we determine the expression for $p(2)$. Let $E_2$ denote the event that $\C$ hits exactly two triangles of $\R$; let further $E_a$, $E_b$, and $E_c$ denote the events that $\C$ hits side $a$, $b$, and $c$, respectively. Since the events $E_2\cap E_a$, $E_2\cap E_b$ and $E_2\cap E_c$ are pairwise disjoint,
\beq
  p(2) = P(E_2) = P(E_2\cap E_a)+P(E_2\cap E_b)+P(E_2\cap E_c)\,.
\eeq
We have 
\beq
  P(E_2\cap E_a) = \int_0^{2\pi}P(E_2\cap E_a\,|\,\phi)\,\frac{\dd\phi}{2\pi} \,,\quad
  P(E_2\cap E_a\,|\,\phi) = \frac{F_{2,\,a}(\phi)}{Q}\,.
\eeq
For the area $F_{2,\,a}(\phi)$, we can write
\begin{align*}
  F_{2,\,a}(\phi)
  = {} & \frac{w(\phi-\gamma)[h_b-w(\phi)]}{\sin\gamma}-\frac{b_3^2(\phi)}{b^2}\frac{Q}{2}
		-\frac{b_6^2(\phi)}{b^2}\frac{Q}{2}\\
  = {} & \frac{h_bw(\phi-\gamma)}{\sin\gamma}-\frac{w(\phi-\gamma)w(\phi)}{\sin\gamma}
		-\frac{Q}{2b^2}\left[d^2(\gamma,\alpha,\phi-\gamma-\beta)\right.\\
       & \left.+\:d^2(\gamma,\alpha,\phi+\alpha)\right]\displaybreak[0]\\
  = {} & aw(\phi-\gamma)-\frac{ab}{Q}\,w(\phi-\gamma)w(\phi)
		-\frac{Q}{2b^2}\left[d^2(\gamma,\alpha,\phi-\gamma-\beta)\right.\\
       & \left.+\:d^2(\gamma,\alpha,\phi+\alpha)\right].
\end{align*}
Hence, with $\int_0^{2\pi}w(\phi-\gamma)w(\phi)\,\dd\phi=2\int_0^{\pi}w(\phi)w(\phi+\gamma)\,\dd\phi$,
\begin{align*}
  P(E_2\cap E_a) 
  = {} & \frac{1}{2\pi Q}\left[2a\int_0^{\pi}w(\phi)\,\dd\phi-\frac{2ab}{Q}
		\int_0^{\pi}w(\phi)w(\phi+\gamma)\,\dd\phi\right.\\
       &	\left.-\:\frac{Q}{b^2}\int_0^{2\pi}d^2(\gamma,\alpha,\phi)\,\dd\phi\right]
  = \frac{au}{\pi Q}-\frac{abI(\gamma)}{\pi Q^2}-\frac{H(\gamma,\alpha)}{2\pi b^2}\,.
\end{align*}
Due to the symmetry of $\R$, we also have
\begin{align*}
  P(E_2\cap E_b) 
  = {} & \frac{bu}{\pi Q}-\frac{bcI(\alpha)}{\pi Q^2}-\frac{H(\alpha,\beta)}
		{2\pi c^2}\,,\\[0.1cm]
  P(E_2\cap E_c) 
  = {} & \frac{cu}{\pi Q}-\frac{caI(\beta)}{\pi Q^2}-\frac{H(\beta,\gamma)}{2\pi a^2}	\,.
\end{align*}
It follows that
\begin{align*}
  p(2)
  = {} & \frac{(a+b+c)u}{\pi Q}-\frac{bcI(\alpha)+caI(\beta)+abI(\gamma)}{\pi Q^2}-p(3)\\
  = {} & \frac{(a+b+c)u}{\pi Q}-\frac{3(a^2+b^2+c^2)J(0)}{2\pi Q^2}
		-\frac{2[bcI(\alpha)+caI(\beta)+abI(\gamma)]}{\pi Q^2}\\
       & +\frac{3[bcJ(\alpha)+caJ(\beta)+abJ(\gamma)]}{\pi Q^2}\,. 
\end{align*}
In order to calculate the expectation $\E=\sum_{i=1}^6 i\,p(i)$ for the random number $Z$ of hit triangles, we put
\begin{align*}
  L := {} & \frac{(a^2+b^2+c^2)J(0)}{2\pi Q^2}\,,\quad
  M := \frac{bcI(\alpha)+caI(\beta)+abI(\gamma)}{\pi Q^2}\,,\\
  N := {} & \frac{bcJ(\alpha)+caJ(\beta)+abJ(\gamma)}{\pi Q^2}\,.
\end{align*}
So we have
\begin{align*}
  p(1) = {} & 1-\frac{(a+b+c)u}{\pi Q}+L+M-N\,,\\
  p(2) = {} & \frac{(a+b+c)u}{\pi Q}-3L-2M+3N\,,\\
  p(3) = {} & 3L+M-3N\,,\quad
  p(4) = N-L-\frac{F}{Q}\,,\quad
  p(5) = 0\,,\quad p(6)=\frac{F}{Q}\,,
\end{align*}
and find
\begin{align*}
  \E 
  = {} & 1-\frac{(a+b+c)u}{\pi Q}+L+M-N\\
       &	+2\left(\frac{(a+b+c)u}{\pi Q}-3L-2M+3N\right)
		+3(3L+M-3N)\\
       & +4\left(N-L-\frac{F}{Q}\right)+5\cdot 0+6\,\frac{F}{Q}
  = 1+\frac{(a+b+c)u}{\pi Q}+\frac{2F}{Q}\,.
\qedhere	
\end{align*}
\end{proof}

\section{Convex bodies with $w(\phi)=2s(\phi)$}

For all convex bodies $\C$ where the reference point $O$ may be choosen such that $w(\phi)=2s(\phi)$ for every $\phi\in[0,2\pi)$, we have
\begin{align*}
  I(x) = {} & \int_0^\pi w(\phi)w(\phi+x)\,\dd\phi 
       = \frac{1}{2}\int_0^{2\pi} w(\phi)w(\phi+x)\,\dd\phi\displaybreak[0]\\
       =	{} & \frac{1}{2}\int_0^{2\pi} 2s(\phi)\,2s(\phi+x)\,\dd\phi
		     = 2\int_0^{2\pi}s(\phi)\,s(\phi+x)\,\dd\phi	= 2J(x)\,, 
\end{align*}
and, therefore, get the formulas of the following corollary which are even simpler to compute.

\begin{corollary}  \label{w=2s}
If condition $\eqref{max}$ holds true and $w(\phi)=2s(\phi)$ for every $\phi\in[0,2\pi)$, then
\begin{align*}
  p(1) = {} & 1 - \frac{(a+b+c)u}{\pi Q} + \frac{(a^2+b^2+c^2)I(0)}{4\pi Q^2}
		     + \frac{bcI(\alpha)+caI(\beta)+abI(\gamma)}{2\pi Q^2}\,,\\[0.15cm]
  p(2) = {} & \frac{(a+b+c)u}{\pi Q} - \frac{3(a^2+b^2+c^2)I(0)}{4\pi Q^2}
		     - \frac{bcI(\alpha)+caI(\beta)+abI(\gamma)}{2\pi Q^2}\,,
			\displaybreak[0]\\[0.15cm]
  p(3) = {} & \frac{3(a^2+b^2+c^2)I(0)}{4\pi Q^2}
		     - \frac{bcI(\alpha)+caI(\beta)+abI(\gamma)}{2\pi Q^2}\,,
			\displaybreak[0]\\[0.15cm]
  p(4) = {} & \frac{bcI(\alpha)+caI(\beta)+abI(\gamma)}{2\pi Q^2}
		     - \frac{(a^2+b^2+c^2)I(0)}{4\pi Q^2} - \frac{F}{Q}\,,
			\displaybreak[0]\\[0.15cm]
  p(5) = {} & 0 \,,\quad p(6) = \frac{F}{Q}\,.                      
\end{align*}
\end{corollary}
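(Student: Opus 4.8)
The plan is to obtain the corollary as an immediate specialization of Theorem \ref{Thm1}, using the identity $I(x)=2J(x)$ just established for bodies admitting a reference point $O$ with $w(\phi)=2s(\phi)$. Since that relation holds for every argument $x$, and since the hypothesis \eqref{max} of the corollary is precisely the hypothesis of Theorem \ref{Thm1}, I would simply substitute $J(x)=\tfrac12 I(x)$ into each of the five probability formulas of the theorem and collect terms. The only quantities that actually need rewriting are the auxiliary functions $f_1,f_2,f_3$ and the $J(0)$-term; everything else is mechanical.

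The key computational step is to re-express the $f_j$ purely in terms of $I$. Using $J(x)=\tfrac12 I(x)$ one finds
\beq
  f_1(x)=I(x)-J(x)=\tfrac12 I(x)\,,\quad
  f_2(x)=2I(x)-3J(x)=\tfrac12 I(x)\,,\quad
  f_3(x)=I(x)-3J(x)=-\tfrac12 I(x)\,,
\eeq
and likewise $J(0)=\tfrac12 I(0)$. Substituting these into the formulas of Theorem \ref{Thm1} turns every term $\frac{(a^2+b^2+c^2)J(0)}{2\pi Q^2}$ into $\frac{(a^2+b^2+c^2)I(0)}{4\pi Q^2}$, and every weighted sum $\frac{bcf_j(\alpha)+caf_j(\beta)+abf_j(\gamma)}{\pi Q^2}$ into $\pm\frac{bcI(\alpha)+caI(\beta)+abI(\gamma)}{2\pi Q^2}$, the sign being dictated by the sign of $f_j$. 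For $p(4)$ I would instead start from its $J$-form in the theorem and replace $J$ by $\tfrac12 I$ directly; the formulas $p(5)=0$ and $p(6)=F/Q$ involve neither $I$ nor $J$ and are carried over verbatim.

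There is no genuine obstacle here: Theorem \ref{Thm1} already performs all of the integral-geometric work, so the corollary is pure bookkeeping once $I=2J$ is available. The only point requiring care is tracking the three distinct coefficients produced by $f_1,f_2,f_3$ (namely $+\tfrac12$, $+\tfrac12$ and $-\tfrac12$), so that the signs in $p(1)$, $p(2)$ and $p(3)$ come out correctly. As a consistency check I would verify that the six resulting expressions still sum to $1$, which they do after the cancellations in the $(a+b+c)u$-, $I(0)$-, $I(\cdot)$- and $F/Q$-terms.
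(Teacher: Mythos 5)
Your proposal is correct and follows exactly the paper's own route: the paper establishes $I(x)=2J(x)$ for bodies with $w(\phi)=2s(\phi)$ and then obtains the corollary by the same substitution $J(x)=\tfrac12 I(x)$ into Theorem~\ref{Thm1}. Your explicit evaluation of $f_1,f_2,f_3$ as $+\tfrac12 I$, $+\tfrac12 I$, $-\tfrac12 I$ and the sum-to-one check are consistent with the stated formulas.
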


\noindent In the following, we give some examples.

\subsection{Rectangles}

\begin{corollary} \label{R1}
Let $\R$ be a lattice of acute or right triangles. Let $\C$ be a rectangle with side lengths $g$ and $h$, and statisfying condition \eqref{max}. Then
\begin{align*}
  p(1) = {} & 1-\frac{(a+b+c)u}{\pi Q}+\frac{(\alpha a^2+\beta b^2+\gamma c^2)(g^2+h^2)}{2\pi Q^2}
		     +\frac{3(g^2+h^2)}{2\pi Q}\displaybreak[0]\\
		{} & +\frac{(a^2+b^2+c^2)F}{\pi Q^2}+\frac{F}{Q}\,,
				\displaybreak[0]\\[0.15cm]
  p(2) = {} & \frac{(a+b+c)u}{\pi Q} - \frac{(a^2+b^2+c^2)(g^2+h^2)}{4Q^2}
		     -\frac{(\alpha a^2+\beta b^2+\gamma c^2)(g^2+h^2)}{2\pi Q^2}\\
		{} & -\frac{3(g^2+h^2)}{2\pi Q}-\frac{2(a^2+b^2+c^2)F}{\pi Q^2}-\frac{F}{Q}\,,
				\displaybreak[0]\\[0.15cm]
  p(3) = {} & \frac{(a^2+b^2+c^2)(g^2+h^2)}{2 Q^2} 
		     -\frac{(\alpha a^2+\beta b^2+\gamma c^2)(g^2+h^2)}{2\pi Q^2}
		     -\frac{3(g^2+h^2)}{2\pi Q}\\
		{} & +\frac{(a^2+b^2+c^2)F}{\pi Q^2}-\frac{F}{Q}\,,\displaybreak[0]\\[0.15cm]
  p(4) = {} & \frac{3(g^2+h^2)}{2\pi Q}-\frac{(a^2+b^2+c^2)(g^2+h^2)}{4Q^2} 
		     +\frac{(\alpha a^2+\beta b^2+\gamma c^2)(g^2+h^2)}{2\pi Q^2}\,,
				\displaybreak[0]\\[0.15cm]
  p(5) = {} & 0 \,,\quad p(6) = \frac{F}{Q}\,,                      
\end{align*}
with $u=2(g+h)$ and $F=gh$.
\end{corollary}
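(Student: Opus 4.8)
The plan is to specialise Corollary~\ref{w=2s} to a rectangle. First I would place the reference point $O$ at the centre of $\C$; by central symmetry $s(\phi+\pi)=s(\phi)$, so $w(\phi)=2s(\phi)$ for every $\phi$ and Corollary~\ref{w=2s} applies. Taking the coordinate axes parallel to the sides, the support and width functions are
\beq
  s(\phi)=\tfrac{g}{2}\lvert\cos\phi\rvert+\tfrac{h}{2}\lvert\sin\phi\rvert\,,\qquad
  w(\phi)=g\lvert\cos\phi\rvert+h\lvert\sin\phi\rvert\,,
\eeq
and $u=2(g+h)$, $F=gh$ are immediate. Since $I$ is an integral over a full period of the $\pi$-periodic function $w$, any fixed angular offset between $\sigma$ and the rectangle is immaterial, so the whole problem reduces to evaluating $I(0)$ and $I(\alpha),I(\beta),I(\gamma)$.

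Second, expanding the product $w(\phi)w(\phi+x)$ gives $I(x)=(g^2+h^2)A(x)+2gh\,C(x)$, where $A(x)=\int_0^\pi\lvert\cos\phi\rvert\lvert\cos(\phi+x)\rvert\,\dd\phi$ and $C(x)=\int_0^\pi\lvert\cos\phi\rvert\lvert\sin(\phi+x)\rvert\,\dd\phi$; the $\pi$-periodicity of $\lvert\cos\rvert$ and $\lvert\sin\rvert$ and the shift $\phi\mapsto\phi+\pi/2$ collapse the four mixed integrals to these two and yield $C(x)=A(x-\pi/2)$. I expect the evaluation of $A(x)$ to be the main obstacle: the absolute values force a split of $[0,\pi]$ at the sign changes of $\cos\phi\cos(\phi+x)$, and piecewise integration on the resulting subintervals gives, for $0\le x\le\pi/2$,
\beq
  A(x)=\sin x+\bigl(\tfrac{\pi}{2}-x\bigr)\cos x\,,\qquad
  C(x)=\cos x+x\sin x\,,
\eeq
hence $I(0)=\tfrac{\pi}{2}(g^2+h^2)+2F$. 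This is exactly where the hypothesis of acute or right triangles enters: it guarantees $\alpha,\beta,\gamma\in(0,\pi/2]$, so these closed forms may be used for all three arguments without a further case distinction.

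Finally, I would insert $I(0)$ and $I(\alpha),I(\beta),I(\gamma)$ into the formulas of Corollary~\ref{w=2s}. The trigonometric factors are eliminated using $bc\sin\alpha=Q$ together with the law of cosines in the form $bc\cos\alpha=\tfrac12(b^2+c^2-a^2)$ (and cyclically), after which the three identities $\alpha+\beta+\gamma=\pi$, $\sum(b^2+c^2-a^2)=a^2+b^2+c^2$ and $\sum\alpha(b^2+c^2-a^2)=\pi(a^2+b^2+c^2)-2(\alpha a^2+\beta b^2+\gamma c^2)$ turn the sum $bc\,I(\alpha)+ca\,I(\beta)+ab\,I(\gamma)$ into a combination of $\alpha a^2+\beta b^2+\gamma c^2$, $a^2+b^2+c^2$, $Q$ and $F$. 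The remaining work is routine bookkeeping: in $p(1)$ the two contributions proportional to $(a^2+b^2+c^2)(g^2+h^2)/Q^2$ coming from the $I(0)$ term and from the sum cancel, and collecting the surviving terms reproduces the stated expression (with $S:=g^2+h^2$ throughout). The formulas for $p(2),p(3),p(4)$ follow from the identical substitution, and $p(5)=0$, $p(6)=F/Q$ are inherited unchanged from Corollary~\ref{w=2s}.
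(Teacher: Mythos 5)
Your proposal is correct and follows essentially the same route as the paper: apply Corollary \ref{w=2s} with $w(\phi)=g|\cos\phi|+h|\sin\phi|$, evaluate $I(0)$ and $I(x)$ for $0<x\le\pi/2$ by splitting the integration range at the sign changes (your reduction to $A(x)$ and $C(x)=A(x-\pi/2)$ is just a tidier bookkeeping of the paper's four-branch case split and yields the identical closed form $I(x)=(g^2+h^2)[\sin x+(\tfrac{\pi}{2}-x)\cos x]+2gh[\cos x+x\sin x]$), and then eliminate the trigonometric factors via $bc\sin\alpha=Q$, the law of cosines, and the angle-sum identities. The cancellation you note in $p(1)$ and the role of the acute/right hypothesis in keeping $\alpha,\beta,\gamma\le\pi/2$ both match the paper's argument.
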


\begin{proof}
The width of the rectangle in the direction $\phi$ is given by
\beq
  w(\phi) = g\left|\cos\phi\right|+h\left|\sin\phi\right|.
\eeq
$w$ has the restriction
\beq
  w(\phi)|_{0\leq\phi<2\pi} = \left\{\ba{@{\:}rcr@{\;}c@{\;}l}
	w_1(\phi) & \mbox{if} & 0\leq & \phi & <\pi/2\,,\\[0.1cm]
	w_2(\phi) & \mbox{if} & \pi/2\leq & \phi & <\pi\,,\\[0.1cm]
	w_3(\phi) & \mbox{if} & \pi\leq & \phi & <3\pi/2\,,\\[0.1cm]
	w_4(\phi) & \mbox{if} & 3\pi/2< & \phi & <2\pi\,,  
  \ea\right.
\eeq
with
\beq
  \ba{c@{\;=\;}r@{\;}c@{\;}c}
	w_1(\phi) &  g\cos\phi & + & h\sin\phi\,,\\[0.1cm]
	w_2(\phi) & -g\cos\phi & + & h\sin\phi\,,\\[0.1cm]
	w_3(\phi) & -g\cos\phi & - & h\sin\phi\,,\\[0.1cm]
	w_4(\phi) &  g\cos\phi & - & h\sin\phi\,.
  \ea
\eeq
For the calculation of $I(x)$, $x\in\{\alpha,\beta,\gamma\}$, we have to distinguish the cases
\beq
  0\leq\phi<\pi/2 \,,\quad \pi/2\leq\phi\leq\pi\,,
\eeq
and
\beq
  x\leq\phi+x<\pi/2 \,,\quad \pi/2\leq\phi+x<\pi \,,\quad
  \pi\leq\phi+x\leq\pi+x\,.
\eeq
Since $0<x\leq\pi/2$, this yields
\beq
  0\leq\phi<\pi/2-x \,,\;\; \pi/2-x\leq\phi<\pi/2 \,,\;\;
  \pi/2\leq\phi<\pi-x \,,\;\; \pi-x\leq\phi\leq\pi\,;
\eeq
therefore
\begin{align*}
  I(x)
  = {} & \left(\int_0^{\pi/2-x}+\int_{\pi/2-x}^{\pi/2}+
		\int_{\pi/2}^{\pi-x}+\int_{\pi-x}^\pi\right)
		w(\phi)w(\phi+x)\:\dd\phi\displaybreak[0]\\
  = {} & \int_0^{\pi/2-x}w_1(\phi)\,w_1(\phi+x)\,\dd\phi
		+\int_{\pi/2-x}^{\pi/2}w_1(\phi)\,w_2(\phi+x)\,\dd\phi\\
       & +\int_{\pi/2}^{\pi-x}w_2(\phi)\,w_2(\phi+x)\,\dd\phi
		+\int_{\pi-x}^{\pi}w_2(\phi)\,w_3(\phi+x)\,\dd\phi\\
  = {} & \frac{1}{2}\left\{\left[(\pi-2x)\left(g^2+h^2\right)+4gh\right]\cos x
		+2\left(g^2+h^2+2xgh\right)\sin x\right\}.
\end{align*}
This, with \eqref{area_Q}, $F=gh$, and
\begin{align*}
  2bc\cos\alpha = {} & b^2+c^2-a^2,\;\,
  2ac\cos\beta  = a^2+c^2-b^2,\;\, 
  2ab\cos\gamma = a^2+b^2-c^2,
\end{align*}
gives
\begin{align*}
  & \!\!\!\!\!\!\! bcI(\alpha)+caI(\beta)+abI(\gamma)\\[0.1cm]
  = {} & (bc/2)\left[(\pi-2\alpha)\left(g^2+h^2\right)+4gh\right]
		\cos\alpha+bc\left(g^2+h^2+2\alpha gh\right)\sin\alpha\\
       & +(ca/2)\left[(\pi-2\beta)\left(g^2+h^2\right)+4gh\right]
		\cos\beta+ca\left(g^2+h^2+2\beta gh\right)\sin\beta\\
       & +(ab/2)\left[(\pi-2\gamma)\left(g^2+h^2\right)+4gh\right]
		\cos\gamma+ab\left(g^2+h^2+2\gamma gh\right)\sin\gamma
			\displaybreak[0]\\[0.1cm]
  = {} & (1/4)\left[(\pi-2\alpha)\left(g^2+h^2\right)+4F\right]
         \left(b^2+c^2-a^2\right)+\left(g^2+h^2+2\alpha F\right)Q\\
       & +(1/4)\left[(\pi-2\beta)\left(g^2+h^2\right)+4F\right]
         \left(c^2+a^2-b^2\right)+\left(g^2+h^2+2\beta F\right)Q\\
       & +(1/4)\left[(\pi-2\gamma)\left(g^2+h^2\right)+4F\right]
         \left(a^2+b^2-c^2\right)+\left(g^2+h^2+2\gamma F\right)Q\\[0.1cm]
  = {} & (1/4)(\pi-2\alpha)\left(g^2+h^2\right)\left(b^2+c^2-a^2\right)\\
       & +(1/4)(\pi-2\beta)\left(g^2+h^2\right)\left(c^2+a^2-b^2\right)\\
       & +(1/4)(\pi-2\gamma)\left(g^2+h^2\right)\left(a^2+b^2-c^2\right)\\
       &	+\left(a^2+b^2+c^2\right)F+3\left(g^2+h^2\right)Q+2\pi FQ
			\displaybreak[0]\\[0.1cm]
  = {} & -(\alpha/2)\left(g^2+h^2\right)\left(a^2+b^2+c^2-2a^2\right)\\
       & -(\beta/2)\left(g^2+h^2\right)\left(a^2+b^2+c^2-2b^2\right)\\
       & -(\gamma/2)\left(g^2+h^2\right)\left(a^2+b^2+c^2-2c^2\right)\\
       &	+(\pi/4)\left(a^2+b^2+c^2\right)\left(g^2+h^2\right)
		+\left(a^2+b^2+c^2\right)F\\
       & +3\left(g^2+h^2\right)Q+2\pi FQ\displaybreak[0]\\[0.1cm]
  = {} & \left(\alpha a^2+\beta b^2+\gamma c^2\right)\left(g^2+h^2\right)
		-(\pi/4)\left(a^2+b^2+c^2\right)\left(g^2+h^2\right)\\
       &	+\left(a^2+b^2+c^2\right)F+3\left(g^2+h^2\right)Q+2\pi FQ\,.
\end{align*}
Furthermore, one finds
\beq
  I(0) = \frac{1}{2}\left[\pi\left(g^2+h^2\right)+4gh\right]
       = \frac{1}{2}\left[\pi\left(g^2+h^2\right)+4F\right].
\eeq
With Corollary \ref{w=2s}, the result follows.
\end{proof}

\begin{corollary} \label{R2}
Let $\R$ be a lattice of obtuse or right triangles with $\pi/2\leq\alpha<\pi$. Let $\C$ be a rectangle with side lengths $g$ and $h$, and statisfying condition \eqref{max}. Then
\begin{align*}
  p(1) 
  = {} & 1-\frac{(a+b+c)u}{\pi Q}
		+\frac{(\alpha a^2+\beta b^2+\gamma c^2)(g^2+h^2)}{2\pi Q^2}
		+\frac{3(g^2+h^2)}{2\pi Q}\displaybreak[0]\\
       & +\frac{2a^2F}{\pi Q^2}+\frac{2F}{Q}-\frac{2\alpha F}{\pi Q}\,,
				\displaybreak[0]\\[0.15cm]
  p(2) 
  = {} & \frac{(a+b+c)u}{\pi Q}-\frac{(a^2+b^2+c^2)(g^2+h^2)}{4Q^2}
		-\frac{(\alpha a^2+\beta b^2+\gamma c^2)(g^2+h^2)}{2\pi Q^2}\\
       & -\frac{3(g^2+h^2)}{2\pi Q}-\frac{(3a^2+b^2+c^2)F}{\pi Q^2}
		-\frac{2F}{Q}+\frac{2\alpha F}{\pi Q}\,,
				\displaybreak[0]\\[0.15cm]
  p(3)
  = {} & \frac{(a^2+b^2+c^2)(g^2+h^2)}{2 Q^2} 
		-\frac{(\alpha a^2+\beta b^2+\gamma c^2)(g^2+h^2)}{2\pi Q^2}
		-\frac{3(g^2+h^2)}{2\pi Q}\\
       & +\frac{2(b^2+c^2)F}{\pi Q^2}-\frac{2F}{Q}
		+\frac{2\alpha F}{\pi Q}\,,
				\displaybreak[0]\\[0.15cm]
  p(4)
  = {} & \frac{3(g^2+h^2)}{2\pi Q}-\frac{(a^2+b^2+c^2)(g^2+h^2)}{4Q^2} 
		+\frac{(\alpha a^2+\beta b^2+\gamma c^2)(g^2+h^2)}{2\pi Q^2}\\
       & -\frac{(b^2+c^2-a^2)F}{\pi Q^2}+\frac{F}{Q}
		-\frac{2\alpha F}{\pi Q}\,,
				\displaybreak[0]\\[0.15cm]
  p(5)
  = {} & 0 \,,\quad p(6) = \frac{F}{Q}\,.                      
\end{align*}
\end{corollary}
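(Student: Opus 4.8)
The plan is to reuse the machinery of Corollary \ref{R1} as far as possible and to isolate the single place where the obtuseness of $\alpha$ actually intervenes. As in Corollary \ref{R1}, the rectangle $\C$ has width function $w(\phi)=g\lvert\cos\phi\rvert+h\lvert\sin\phi\rvert$ and admits its centre as reference point, so $w(\phi)=2s(\phi)$ and Corollary \ref{w=2s} applies; everything therefore reduces to computing $I(0)$, $I(\alpha)$, $I(\beta)$, $I(\gamma)$ and the symmetric combination $bcI(\alpha)+caI(\beta)+abI(\gamma)$. Since a triangle has at most one non-acute angle, here $\beta,\gamma\in(0,\pi/2)$, so $I(0)$, $I(\beta)$ and $I(\gamma)$ are given verbatim by the formula derived in the proof of Corollary \ref{R1}; only $I(\alpha)$ with $\pi/2\le\alpha<\pi$ requires new work.

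For $I(\alpha)$ I would first record the symmetry $I(x)=I(\pi-x)$: a change of variables $\phi\mapsto\phi-x$ together with the $\pi$-periodicity of $w$ gives $I(-x)=I(x)$, while $I(x+\pi)=I(x)$ is immediate from $w(\,\cdot\,+\pi)=w(\,\cdot\,)$, and hence $I(\pi-x)=I(x)$. Applying this with the acute angle $\pi-\alpha$ lets me read off, from the Corollary \ref{R1} formula and $\cos(\pi-\alpha)=-\cos\alpha$, $\sin(\pi-\alpha)=\sin\alpha$,
\begin{align*}
  I(\alpha) = {} & \tfrac12(\pi-2\alpha)(g^2+h^2)\cos\alpha - 2gh\cos\alpha\\
            & + (g^2+h^2)\sin\alpha + 2(\pi-\alpha)gh\sin\alpha\,.
\end{align*}
As a check, I would also recompute the integral directly: for $\pi/2\le\alpha<\pi$ the sign pattern of $w(\phi+\alpha)$ changes at $\phi=\pi-\alpha$ and $\phi=3\pi/2-\alpha$, which together with the breakpoint $\phi=\pi/2$ of $w(\phi)$ splits $[0,\pi]$ into the four subintervals $[0,\pi-\alpha]$, $[\pi-\alpha,\pi/2]$, $[\pi/2,3\pi/2-\alpha]$, $[3\pi/2-\alpha,\pi]$; integrating the appropriate products of the linear pieces $w_1,\dots,w_4$ on each yields the same expression.

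With $I(\alpha)$ in hand I would assemble $bcI(\alpha)+caI(\beta)+abI(\gamma)$ exactly as in Corollary \ref{R1}, using $gh=F$, the law of cosines $2bc\cos\alpha=b^2+c^2-a^2$ (and cyclically), and $bc\sin\alpha=ca\sin\beta=ab\sin\gamma=Q$. Comparing term by term with the all-acute computation, the obtuse $\alpha$ alters only two contributions: the summand $2bcF\cos\alpha=F(b^2+c^2-a^2)$ turns into $-F(b^2+c^2-a^2)$, and $2\alpha FQ$ turns into $2(\pi-\alpha)FQ$. Relative to the acute case this contributes the extra terms $-2F(b^2+c^2-a^2)$ and $2(\pi-2\alpha)FQ$; in particular the three $\sin$-terms now sum to $4(\pi-\alpha)FQ$ because $(\pi-\alpha)+\beta+\gamma=2(\pi-\alpha)$.

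Finally I would substitute this combination, together with $I(0)=\tfrac12[\pi(g^2+h^2)+4F]$, into the five formulas of Corollary \ref{w=2s} and collect terms. The main difficulty is not conceptual but bookkeeping: carrying the negative $\cos\alpha$ and the angle $\alpha$ itself (rather than $\pi-\alpha$) correctly through the law-of-cosines substitution, and verifying that the new $F/Q$- and $F/(\pi Q)$-contributions distribute across $p(1),\dots,p(4)$ exactly as stated (notably the $\pm 2\alpha F/(\pi Q)$ terms and the modified coefficients of $F/(\pi Q^2)$). A useful running check is that $p(1)+\cdots+p(6)=1$, and that setting $\alpha=\pi/2$ forces $b^2+c^2=a^2$ and recovers the right-triangle case shared by Corollaries \ref{R1} and \ref{R2}.
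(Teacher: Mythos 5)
Your proposal is correct and follows essentially the same route as the paper: you isolate the fact that only $I(\alpha)$ changes when $\alpha$ is obtuse, form the difference $I(\alpha)-\tilde{I}(\alpha)=2(\pi-2\alpha)gh\sin\alpha-4gh\cos\alpha$ from the acute-case formula, and propagate the resulting $F$-corrections through Corollary~\ref{w=2s}, which is exactly the paper's argument. The only (minor) deviation is that you derive $I(\alpha)$ from the acute formula via the symmetry $I(x)=I(\pi-x)$ of the $\pi$-periodic width function, whereas the paper recomputes the integral directly over the four subintervals $[0,\pi-\alpha]$, $[\pi-\alpha,\pi/2]$, $[\pi/2,3\pi/2-\alpha]$, $[3\pi/2-\alpha,\pi]$ that you list as your cross-check; both yield the same expression.
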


\begin{proof}
We have
\begin{align*}
  I(0)
  = {} & \frac{1}{2}\left[\pi\left(g^2+h^2\right)+4gh\right],\\	
  I(\beta)
  = {} & \frac{1}{2}\left\{\left[(\pi-2\beta)\left(g^2+h^2\right)+4gh\right]\cos\beta
		+2\left(g^2+h^2+2\beta gh\right)\sin\beta\right\},\\
  I(\gamma)
  = {} & \frac{1}{2}\left\{\left[(\pi-2\gamma)\left(g^2+h^2\right)+4gh\right]\cos\gamma
		\;\!+2\left(g^2+h^2+2\gamma gh\right)\sin\gamma\right\},
\end{align*}
as in the case $0<\alpha\leq\pi/2$. For the calculation of $I(\alpha)$, we have to distinguish the cases
\beq
  0\leq\phi<\pi/2 \,,\quad \pi/2\leq\phi\leq\pi\,,
\eeq
and
\beq
  \alpha\leq\phi+\alpha<\pi \,,\quad \pi\leq\phi+\alpha<3\pi/2 \,,\quad
  3\pi/2\leq\phi+\alpha\leq\pi+\alpha\,.
\eeq
This gives
\beq
  0\leq\phi<\pi-\alpha \,,\; \pi-\alpha\leq\phi<\pi/2 \,,\;
  \pi/2\leq\phi<3\pi/2-\alpha \,,\; 3\pi/2-\alpha\leq\phi\leq\pi\,;
\eeq
therefore
\begin{align*}
  I(\alpha)
  = {} & \left(\int_0^{\pi-\alpha}+\int_{\pi-\alpha}^{\pi/2}+
		\int_{\pi/2}^{3\pi/2-\alpha}+\int_{3\pi/2-\alpha}^\pi\right)
		w(\phi)w(\phi+\alpha)\:\dd\phi\displaybreak[0]\\
  = {} & \int_0^{\pi-\alpha}w_1(\phi)\,w_2(\phi+\alpha)\,\dd\phi
		+\int_{\pi-\alpha}^{\pi/2}w_1(\phi)\,w_3(\phi+\alpha)\,\dd\phi\\
       & +\int_{\pi/2}^{3\pi/2-\alpha}w_2(\phi)\,w_3(\phi+\alpha)\,\dd\phi
		+\int_{3\pi/2-\alpha}^{\pi}w_2(\phi)\,w_4(\phi+\alpha)\,\dd\phi\\
  = {} & \frac{1}{2}\,\big\{\big[(\pi-2\alpha)\left(g^2+h^2\right)
		-4gh\big]\cos\alpha+2\big[g^2+h^2\\
       & \qquad +2(\pi-\alpha)gh\big]\sin\alpha\big\}.
\end{align*}
Now, we denote the formula of $I(\alpha)$ for $0<\alpha\leq\pi/2$ with $\tilde{I}(\alpha)$, and find
\beq
  I(\alpha)-\tilde{I}(\alpha) = 2(\pi-2\alpha)gh\sin\alpha-4gh\cos\alpha\,.
\eeq
(Of course, for $\alpha=\pi/2$ we have $I(\alpha)-\tilde{I}(\alpha)=0$.) 
With
\beq
  bc\sin\alpha = Q \quad\mbox{and}\quad 
  bc\cos\alpha = \frac{1}{2}\left(b^2+c^2-a^2\right),
\eeq
we get
\begin{align*}
  \frac{bc\big[I(\alpha)-\tilde{I}(\alpha)\big]}{2\pi Q^2}
  = {} & \frac{2(\pi-2\alpha)ghQ}{2\pi Q^2}
		-\frac{2gh\left(b^2+c^2-a^2\right)}{2\pi Q^2}\\
  = {} & \frac{(\pi-2\alpha)F}{\pi Q}-\frac{(b^2+c^2-a^2)F}{\pi Q^2}\\
  = {} & \frac{F}{Q}-\frac{2\alpha F}{\pi Q}-\frac{(b^2+c^2-a^2)F}{\pi Q^2}\,,
\end{align*}
hence
\beq
  \frac{bcI(\alpha)}{2\pi Q^2} = \frac{bc\tilde{I}(\alpha)}{2\pi Q^2}
	+\frac{F}{Q}-\frac{2\alpha F}{\pi Q}-\frac{(b^2+c^2-a^2)F}{\pi Q^2}\,.
\eeq
We denote the probabilities for $0<\alpha\leq\pi/2$ by $\tilde{p}(i)$. From Corollary~\ref{w=2s}, it follows that
\begin{align*}
  p(1) = {} & \tilde{p}(1)+\frac{F}{Q}-\frac{2\alpha F}{\pi Q}
		     -\frac{(b^2+c^2-a^2)F}{\pi Q^2}\,,\displaybreak[0]\\[0.1cm]
  p(2) = {} & \tilde{p}(2)-\frac{F}{Q}+\frac{2\alpha F}{\pi Q}
		     +\frac{(b^2+c^2-a^2)F}{\pi Q^2}\,,\\[0.1cm]
  p(3) = {} & \tilde{p}(3)-\frac{F}{Q}+\frac{2\alpha F}{\pi Q}
		     +\frac{(b^2+c^2-a^2)F}{\pi Q^2}\,,\\[0.1cm]
  p(4) = {} & \tilde{p}(4)+\frac{F}{Q}-\frac{2\alpha F}{\pi Q}
		     -\frac{(b^2+c^2-a^2)F}{\pi Q^2}\,,\\[0.1cm]
  p(5) = {} & \tilde{p}(5) = 0\,,\quad p(6) = \tilde{p}(6) = \frac{F}{Q}\,.
\end{align*}
The formulae of the corollary follow easily. 
\end{proof}

\noindent
As special case of Corollaries \ref{R1} and \ref{R2}, respectively, we get the probabilities for a needle of length $\ell:=g$ with $h=0$, $u=2\ell$ and $F=0$. We find 
\begin{align*}
  p(1) 
  = {} & 1-\frac{2(a+b+c)\ell}{\pi Q}
		+\frac{(\alpha a^2+\beta b^2+\gamma c^2)\ell^2}{2\pi Q^2}
		+\frac{3\ell^2}{2\pi Q}\,,\displaybreak[0]\\[0.15cm]
  p(2) 
  = {} & \frac{2(a+b+c)\ell}{\pi Q}-\frac{(a^2+b^2+c^2)\ell^2}{4Q^2}
		-\frac{(\alpha a^2+\beta b^2+\gamma c^2)\ell^2}{2\pi Q^2}
		-\frac{3\ell^2}{2\pi Q}\,,\displaybreak[0]\\[0.15cm]
  p(3) 
  = {} & \frac{(a^2+b^2+c^2)\ell^2}{2 Q^2} 
	     -\frac{(\alpha a^2+\beta b^2+\gamma c^2)\ell^2}{2\pi Q^2}
	     -\frac{3\ell^2}{2\pi Q}\,,\displaybreak[0]\\[0.15cm]
  p(4) 
  = {} & \frac{3\ell^2}{2\pi Q}-\frac{(a^2+b^2+c^2)\ell^2}{4Q^2} 
		+\frac{(\alpha a^2+\beta b^2+\gamma c^2)\ell^2}{2\pi Q^2}\,,
		\displaybreak[0]\\[0.15cm]
  p(5) = {} & 0 \,,\quad p(6) = 0\,.                      	
\end{align*}
$p(1)$ is the result obtained by Markov (see \eqref{p(1)}). 

For the random throw of a needle of length $\ell\leq\sqrt{3}\,a/2$ onto the lattice $\mathcal{R}_{a,\,a,\,a}$ of equilateral triangles we have $\alpha=\beta=\gamma=\pi/3$, $Q=\sqrt{3}\,a^2/2$, and therefore
\begin{align*}
  p(1) = {} & 1-\frac{4\,\sqrt{3}}{\pi}\,\frac{\ell}{a}
		     +\left(\frac{\sqrt{3}}{\pi}+\frac{2}{3}\right)\frac{\ell^2}{a^2}\,,\quad
  p(2) = \frac{4\,\sqrt{3}}{\pi}\,\frac{\ell}{a}-\left(\frac{\sqrt{3}}{\pi}
		+\frac{5}{3}\right)\frac{\ell^2}{a^2}\,,\\[0.1cm]
  p(3) = {} & \left(\frac{4}{3}-\frac{\sqrt{3}}{\pi}\right)\frac{\ell^2}{a^2}\,,\quad
  p(4) = \left(\frac{\sqrt{3}}{\pi}-\frac{1}{3}\right)\frac{\ell^2}{a^2}\,,\quad
  p(5) = 0\,,\quad p(6) = 0\,.
\end{align*}
This is the result obtained by Santal\'o \cite[pp.\:167-170]{Santalo1} (see also \cite[pp.\:140-141]{Santalo2}).

\subsection{Ellipses}

Let $g$ and $h$ be the lengths of the major axis and the minor axis, respectively. We have $F=\pi gh/4$ and 
\beq
  w(\phi) 
	= \sqrt{g^2\cos^2\phi+h^2\sin^2\phi} = g\,\sqrt{1-\mu^2\sin^2\phi}\,,\quad
  \mu^2 = 1-\left(\frac{h}{g}\right)^2,
\eeq
hence
\beq
  u = \int_0^{\pi}w(\phi)\,\dd\phi = 2\int_0^{\pi/2}w(\phi)\,\dd\phi 
    = 2gE(\mu)\,, 
\eeq
where
\beq
  E(\mu) = \int_0^{\pi/2}\sqrt{1-\mu^2\sin^2\phi}\;\dd\phi
\eeq 
is the complete elliptic integral of the second kind. Furthermore, one has
\beq
  I(x) = g^2\int_0^{\pi}\sqrt{(1-\mu^2\sin^2\phi)(1-\mu^2\sin^2(\phi+x))}\;\dd\phi\,.    
\eeq
Hence, if the inequality condition \eqref{max} with 
\beq
  s(\phi) = \frac{g}{2}\,\sqrt{1-\mu^2\sin^2\phi}
\eeq
is fulfilled, we have found all hitting probabilities for ellipses and $\R$.

For an ellipse with $h=0$ we have $\mu=1$, hence
\begin{align*}
  u
  = {} & 2g\int_0^{\pi/2}\sqrt{1-\sin^2\phi}\;\dd\phi
  = 2g\int_0^{\pi/2}|\cos\phi|\;\dd\phi = 2g\int_0^{\pi/2}\cos\phi\;\dd\phi
  = 2g\,,
\end{align*}
and
\begin{align*}
  I(x) 
  = {} & g^2\int_0^{\pi}\sqrt{(1-\sin^2\phi)(1-\sin^2(\phi+x))}\;\dd\phi\\
  = {} & g^2\int_0^{\pi}\sqrt{\cos^2\phi\,\cos^2(\phi+x)}\;\dd\phi
  = g^2\int_0^{\pi}\left|\cos\phi\cos(\phi+x)\right|\;\dd\phi\displaybreak[0]\\
  = {} & g^2\left(\int_0^{\pi/2-x}-\int_{\pi/2-x}^{\pi/2}+\int_{\pi/2}^\pi\right) 
		\cos\phi\cos(\phi+x)\,\dd x\displaybreak[0]\\
  = {} & g^2\left(\int_0^{\pi/2-x}-\int_{\pi/2-x}^{\pi/2}+\int_{\pi/2}^\pi\right)
		\bigg(\frac{1}{2}\cos x+\frac{1}{2}\cos x\cos 2\phi\\
       &	-\frac{1}{2}\sin x\sin 2\phi\bigg)\,\dd\phi
  = g^2\left(\frac{1}{2}\,(\pi-2x)\cos x+\sin x\right). 
\end{align*}
These formulas are equal to the respective formulas of the rectangle for $h=0$. Therefore, as was to be expected, we get needles also as special cases of ellipses.

\section{Half disc}

Now, we calculate the hitting probabilities in the case that $\C$ is a half disc of radius $r$.

\begin{corollary}
Let $\R$ be a lattice of acute or right triangles. Let $\C$ be a half disc with radius $r$, and satisfying condition \eqref{max}. Then
\begin{align*}
  p(1)
  = {} & 1-\frac{(a+b+c)u}{\pi Q}+\frac{(a^2+b^2+c^2)r^2}{Q^2}
		+\frac{4(ab+bc+ca)r^2}{\pi Q^2}\\
       &	-\frac{(\alpha a^2+\beta b^2+\gamma c^2)r^2}{2\pi Q^2}
		+\frac{(\alpha bc+\beta ca+\gamma ab)r^2}{\pi Q^2}
		-\frac{9r^2}{2\pi Q}\,,\displaybreak[0]\\[0.15cm]
  p(2)
  = {} & \frac{(a+b+c)u}{\pi Q}-\frac{13(a^2+b^2+c^2)r^2}{4Q^2}
		-\frac{(8-\pi)(ab+bc+ca)r^2}{\pi Q^2}\\
       &	+\frac{5(\alpha a^2+\beta b^2+\gamma c^2)r^2}{2\pi Q^2}
		-\frac{3(\alpha bc+\beta ca+\gamma ab)r^2}{\pi Q^2}
		+\frac{33r^2}{2\pi Q}\,,\\[0.15cm]
  p(3)
  = {} & \frac{7(a^2+b^2+c^2)r^2}{2Q^2}
		+\frac{(4-2\pi)(ab+bc+ca)r^2}{\pi Q^2}\\
       &	-\frac{7(\alpha a^2+\beta b^2+\gamma c^2)r^2}{2\pi Q^2}
		+\frac{3(\alpha bc+\beta ca+\gamma ab)r^2}{\pi Q^2}
		-\frac{39r^2}{2\pi Q}\,,\\[0.15cm]
  p(4)
  = {} & \frac{(ab+bc+ca)r^2}{Q^2}-\frac{5(a^2+b^2+c^2)r^2}{4Q^2}
       	+\frac{3(\alpha a^2+\beta b^2+\gamma c^2)r^2}{2\pi Q^2}\\
       & -\frac{(\alpha bc+\beta ca+\gamma ab)r^2}{\pi Q^2}
		+\frac{15r^2}{2\pi Q}-\frac{F}{Q}\,,\\[0.15cm]
  p(5)
  = {} & 0\,,\quad p(6)=\frac{F}{Q}\,,		
\end{align*}
with $u=(\pi+2)r$ and $F=\pi r^2/2$.
\end{corollary}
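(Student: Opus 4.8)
The plan is to specialise Theorem~\ref{Thm1} to the half disc. Because a half disc is not centrally symmetric, there is no reference point for which $w(\phi)=2s(\phi)$, so Corollary~\ref{w=2s} is \emph{not} available and the integrals $I(x)$ and $J(x)$ must be evaluated separately; this is the feature that distinguishes the half disc from the rectangle and the ellipse treated earlier. Concretely I need the perimeter $u$, the area $F$, the width $w$, and the four quantities $J(0)$, $I(x)$, $J(x)$ for $x\in\{\alpha,\beta,\gamma\}$; once these are known, the five formulae follow from Theorem~\ref{Thm1} by substitution, while $p(5)=0$ and $p(6)=F/Q$ are inherited verbatim.

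First I would place the disc as $\{(x,y)\in\RR^2\mid x^2+y^2\le r^2,\ y\ge 0\}$ and take the reference point $O$ at the midpoint of the straight edge (the centre of the bounding circle). Then $s$ equals $r$ on the range of directions whose support line meets the circular arc and $r\lvert\cos\phi\rvert$ on those meeting the diameter, so $w(\phi)=s(\phi)+s(\phi+\pi)=r(1+\lvert\cos\phi\rvert)$, which is $\pi$-periodic as required; this gives $u=\int_0^\pi w\,\dd\phi=(\pi+2)r$ and $F=\pi r^2/2$. The midpoint lies on $\partial\C$ rather than strictly inside, but this is harmless: translating $O$ alters each $J(x)$ only by a term proportional to $\cos x$, and since $bc\cos\alpha+ca\cos\beta+ab\cos\gamma=\tfrac12(a^2+b^2+c^2)$ by the law of cosines, these increments cancel inside every $p(i)$ against the $J(0)$-term. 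Hence the probabilities do not depend on the (interior) choice of $O$ and the convenient boundary point is legitimate.

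The substance of the argument is the evaluation of the integrals, and the delicate case analysis here is the main obstacle. For $J(0)=\int_0^{2\pi}s^2\,\dd\phi$ the two pieces give $\pi r^2+\tfrac12\pi r^2=\tfrac32\pi r^2$. For $I(x)=r^2\int_0^\pi(1+\lvert\cos\phi\rvert)(1+\lvert\cos(\phi+x)\rvert)\,\dd\phi$ with $x\in(0,\pi/2]$ (allowed since $\R$ consists of acute or right triangles, so $\alpha,\beta,\gamma\le\pi/2$), expanding the product reduces everything to $\int_0^\pi\lvert\cos\phi\cos(\phi+x)\rvert\,\dd\phi=\tfrac12(\pi-2x)\cos x+\sin x$, already obtained in the needle specialisation of the ellipse example; this yields $I(x)=r^2\bigl[\pi+4+\tfrac12(\pi-2x)\cos x+\sin x\bigr]$. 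The genuinely new computation is $J(x)=\int_0^{2\pi}s(\phi)s(\phi+x)\,\dd\phi$, where the integrand is piecewise: I would split $[0,2\pi]$ according to whether $\phi$ and $\phi+x$ fall on the arc-part or the diameter-part of $s$, tracking the sign of the cosine factors on each subinterval. Carrying out these routine but error-prone splittings, I expect $J(x)=r^2\bigl[\pi-x+\tfrac52\sin x+\tfrac12(\pi-3x)\cos x\bigr]$, which correctly returns $\tfrac32\pi r^2$ at $x=0$.

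Finally I would insert these values into Theorem~\ref{Thm1}. Forming $f_1(x)=I(x)-J(x)$, $f_2,f_3$ and the cyclic sums $bc\,f_i(\alpha)+ca\,f_i(\beta)+ab\,f_i(\gamma)$, the $\sin$-terms collapse via $bc\sin\alpha=ca\sin\beta=ab\sin\gamma=Q$ from \eqref{area_Q}, the $\cos$-terms collapse via the law of cosines, and the terms linear in the angles reassemble through $\alpha+\beta+\gamma=\pi$ into the blocks $\alpha a^2+\beta b^2+\gamma c^2$ and $\alpha bc+\beta ca+\gamma ab$ that appear in the statement; the purely elementary constants produce the $(ab+bc+ca)r^2$ and $r^2/Q$ contributions. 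I have verified that this procedure reproduces the claimed $p(1)$ and $p(4)$ exactly, and the remaining probabilities follow in the same way, completing the proof.
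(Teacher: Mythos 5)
Your proposal is correct and follows essentially the same route as the paper: determine $w(\phi)=r(1+\lvert\cos\phi\rvert)$ and the piecewise support function, evaluate $I(x)$, $J(x)$ and $J(0)$ (your values agree exactly with the paper's), and substitute into the $L,M,N$ form of Theorem~\ref{Thm1}. The only differences are cosmetic: you compute $I(x)$ by expanding the product and reusing $\int_0^\pi\lvert\cos\phi\cos(\phi+x)\rvert\,\dd\phi$ rather than the paper's direct case splitting, and you supply a (correct) justification for taking $O$ at the midpoint of the diameter, which the paper does tacitly.
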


\begin{proof}
The width of the half disc in the direction $\phi$ is given by
\beq
  w(\phi) = r\left(1+\left|\cos\phi\right|\right),
\eeq
see Fig.\:\ref{w_and_s}.

\begin{figure}[h] 
  \begin{center}
	\includegraphics[scale=0.75]{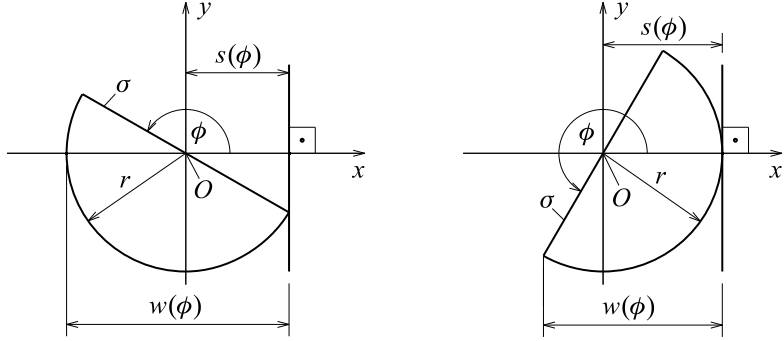}
	\caption{\label{w_and_s} Width function $w$ and support function $s$ for a half disc}
  \end{center}
\end{figure}

\noindent
$w$ has the restriction
\beq
  w(\phi)|_{0\leq\phi<2\pi} = \left\{\ba{@{\:}ccr@{\;}c@{\;}l}
	w_1(\phi) & \mbox{if} & 0\leq & \phi & <\pi/2\,,\\[0.1cm]
	w_{23}(\phi) & \mbox{if} & \pi/2\leq & \phi & <3\pi/2\,,\\[0.1cm]
	w_4(\phi) & \mbox{if} & 3\pi/2\leq & \phi & <2\pi\,,  
  \ea\right.
\eeq
with
\beq
  w_1(\phi) = r(1+\cos\phi)\,,\quad
  w_{23}(\phi) = r(1-\cos\phi)\,,\quad
  w_4(\phi) = r(1+\cos\phi)\,.
\eeq
For the calculation of $I(x)$, $x\in\{\alpha,\beta,\gamma\}$, we have to distinguish the cases
\beq
  0\leq\phi<\pi/2 \,,\quad \pi/2\leq\phi\leq\pi\,,
\eeq
and
\beq
  x\leq\phi+x<\pi/2 \,,\quad \pi/2\leq\phi+x\leq\pi+x\,.
\eeq
Since $0<x\leq\pi/2$, this yields
\beq
  0\leq\phi<\pi/2-x \,,\;\; \pi/2-x\leq\phi<\pi/2 \,,\;\;
  \pi/2\leq\phi<\pi\,;
\eeq
therefore
\begin{align*}
  I(x)
  = {} & \left(\int_0^{\pi/2-x}+\int_{\pi/2-x}^{\pi/2}+
		\int_{\pi/2}^{\pi}\right)w(\phi)w(\phi+x)\:\dd\phi
			\displaybreak[0]\\
  = {} & \int_0^{\pi/2-x}w_1(\phi)\,w_1(\phi+x)\,\dd\phi
		+\int_{\pi/2-x}^{\pi/2}w_1(\phi)\,w_{23}(\phi+x)\,\dd\phi\\
       & +\int_{\pi/2}^{\pi}w_{23}(\phi)\,w_{23}(\phi+x)\,\dd\phi\\
  = {} & (\pi+4)r^2+\frac{1}{2}\,(\pi-2x)r^2\cos x+r^2\sin x\,.
\end{align*}
This, with \eqref{area_Q} and
\begin{align*}
  2bc\cos\alpha = {} & b^2+c^2-a^2,\;\,
  2ac\cos\beta  = a^2+c^2-b^2,\;\, 
  2ab\cos\gamma = a^2+b^2-c^2,
\end{align*} 
yields
\begin{align*}
  & \!\!\!\!\!\!\! bcI(\alpha)+caI(\beta)+abI(\gamma)\\[0.1cm]
  = {} & (\pi+4)(ab+bc+ca)r^2\\
       & +(1/2)bc(\pi-2\alpha)r^2\cos\alpha+bc r^2\sin\alpha\\
       & +(1/2)ca(\pi-2\beta)r^2\cos\beta+ca r^2\sin\beta\\
       & +(1/2)ab(\pi-2\gamma)r^2\cos\gamma+ab r^2\sin\gamma\\[0.1cm]
  = {} & (\pi+4)(ab+bc+ca)r^2\\
       & +(1/4)(\pi-2\alpha)(b^2+c^2-a^2)r^2+Q r^2\\
       & +(1/4)(\pi-2\beta)(c^2+a^2-b^2)r^2+Q r^2\\
       & +(1/4)(\pi-2\gamma)(a^2+b^2-c^2)r^2+Q r^2
			\displaybreak[0]\\[0.1cm]
  = {} & (\pi+4)(ab+bc+ca)r^2+(\pi/4)(a^2+b^2+c^2)r^2+3Qr^2\\
       & -(\alpha/2)(a^2+b^2+c^2-2a^2)r^2
		-(\beta/2)(a^2+b^2+c^2-2b^2)r^2\\
       & -(\gamma/2)(a^2+b^2+c^2-2c^2)r^2
			\displaybreak[0]\\[0.1cm]
  = {} & (\pi+4)(ab+bc+ca)r^2+(\pi/4)(a^2+b^2+c^2)r^2+3Qr^2\\
       & -(\pi/2)(a^2+b^2+c^2)r^2+(\alpha a^2+\beta b^2+\gamma c^2)r^2
			\displaybreak[0]\\[0.1cm]
  = {} & (\pi+4)(ab+bc+ca)r^2-(\pi/4)(a^2+b^2+c^2)r^2\\
       & +(\alpha a^2+\beta b^2+\gamma c^2)r^2+3Qr^2\,.
\end{align*}
The support function $s$ has the restriction
\beq
  s(\phi)|_{0\leq\phi<3\pi} = \left\{\ba{@{\:}ccr@{\;}c@{\;}l}
	s_1(\phi) & \mbox{if} & 0\leq & \phi & <\pi/2\,,\\[0.1cm]
	s_2(\phi) & \mbox{if} & \pi/2\leq & \phi & <\pi\,,\\[0.1cm]
	s_{34}(\phi) & \mbox{if} & \pi\leq & \phi & <2\pi\,,\\[0.1cm]
	s_1(\phi) & \mbox{if} & 2\pi\leq & \phi & <5\pi/2\,,\\[0.1cm] 
	s_2(\phi) & \mbox{if} & 5\pi/2\leq & \phi & <3\pi\,,
  \ea\right.
\eeq
with
\beq
  s_1(\phi) = r\cos\phi\,,\quad
  s_2(\phi) = -r\cos\phi\,,\quad
  s_{34}(\phi) = r\,.
\eeq
For the calculation of $J(x)$, $x\in\{0,\alpha,\beta,\gamma\}$, we have to distinguish the cases
\beq
  0\leq\phi<\pi/2 \,,\quad \pi/2\leq\phi<\pi \,,\quad
  \pi\leq\phi\leq 2\pi
\eeq
and
\beq
  \ba{l}
	x\leq\phi+x<\pi/2 \,,\quad \pi/2\leq\phi+x<\pi \,,\quad
	\pi\leq\phi+x<2\pi\,,\\[0.1cm]
	2\pi\leq\phi+x\leq 2\pi+x\,.
  \ea
\eeq
Since $0\leq x\leq\pi/2$, this yields
\beq
  \ba{l}
	0\leq\phi<\pi/2-x \,,\;\; \pi/2-x\leq\phi<\pi/2 \,,\;\;
	\pi/2\leq\phi<\pi-x\,,\\[0.1cm]
	\pi-x\leq\phi<\pi \,,\;\; \pi\leq\phi<2\pi-x \,,\;\;
	2\pi-x\leq\phi\leq2\pi\,;
  \ea
\eeq
therefore
\begin{align*}
  J(x)
  = {} & \bigg(\int_0^{\pi/2-x}+\int_{\pi/2-x}^{\pi/2}
		+\int_{\pi/2}^{\pi-x}+\int_{\pi-x}^\pi\\
       &	+\int_\pi^{2\pi-x}+\int_{2\pi-x}^{2\pi}\bigg)\,
		s(\phi)s(\phi+x)\:\dd\phi\displaybreak[0]\\
  = {} & \int_0^{\pi/2-x}s_1(\phi)\,s_1(\phi+x)\,\dd\phi
		+\int_{\pi/2-x}^{\pi/2}s_1(\phi)\,s_2(\phi+x)\,\dd\phi
			\displaybreak[0]\\
       & +\int_{\pi/2}^{\pi-x}s_2(\phi)\,s_2(\phi+x)\,\dd\phi
		+\int_{\pi-x}^{\pi}s_2(\phi)\,s_{34}(\phi+x)\,\dd\phi
			\displaybreak[0]\\
       & +\int_{\pi}^{2\pi-x}s_{34}(\phi)\,s_{34}(\phi+x)\,\dd\phi
		+\int_{2\pi-x}^{2\pi}s_{34}(\phi)\,s_{1}(\phi+x)\,\dd\phi
			\displaybreak[0]\\
  = {} & (\pi-x)r^2+\frac{1}{2}\,(\pi-3x)r^2\cos x
		+\frac{5}{2}\,r^2\sin x\,.
\end{align*}
This, with \eqref{area_Q} and
\begin{align*}
  2bc\cos\alpha = {} & b^2+c^2-a^2,\;\,
  2ac\cos\beta  = a^2+c^2-b^2,\;\, 
  2ab\cos\gamma = a^2+b^2-c^2,
\end{align*}
gives
\begin{align*}
  & \!\!\!\!\!\!\! bcJ(\alpha)+caJ(\beta)+abJ(\gamma)\\[0.1cm]
  = {} & bc(\pi-\alpha)r^2+(1/2)bc(\pi-3\alpha)r^2\cos\alpha
		+(5/2)bcr^2\sin\alpha\\
       & + ca(\pi-\beta)r^2+(1/2)ca(\pi-3\beta)r^2\cos\beta
		+(5/2)car^2\sin\beta\\
       & + ab(\pi-\gamma)r^2+(1/2)ab(\pi-3\gamma)r^2\cos\gamma
		+(5/2)abr^2\sin\gamma\\[0.1cm]
  = {} & \pi(ab+bc+ca)r^2-(\alpha bc+\beta ca+\gamma ab)r^2\\
       & +(1/4)(\pi-3\alpha)(b^2+c^2-a^2)r^2
		+(1/4)(\pi-3\beta)(c^2+a^2-b^2)r^2\\
       & +(1/4)(\pi-3\gamma)(a^2+b^2-c^2)r^2+(15/2)Qr^2
			\displaybreak[0]\\[0.1cm]
  = {} & \pi(ab+bc+ca)r^2-(\alpha bc+\beta ca+\gamma ab)r^2
		+(\pi/4)(a^2+b^2+c^2)r^2\\
       & -(3\alpha/4)(a^2+b^2+c^2-2a^2)r^2
		-(3\beta/4)(a^2+b^2+c^2-2b^2)r^2\\
       & -(3\gamma/4)(a^2+b^2+c^2-2c^2)r^2+(15/2)Qr^2
			\displaybreak[0]\\[0.1cm]
  = {} & \pi(ab+bc+ca)r^2-(\alpha bc+\beta ca+\gamma ab)r^2
		+(\pi/4)(a^2+b^2+c^2)r^2\\
       & -(3\pi/4)(a^2+b^2+c^2)r^2
		+(3/2)(\alpha a^2+\beta b^2+\gamma c^2)r^2+(15/2)Qr^2\\[0.1cm]
  = {} & \pi(ab+bc+ca)r^2-(\alpha bc+\beta ca+\gamma ab)r^2
		-(\pi/2)(a^2+b^2+c^2)r^2\\
       & +(3/2)(\alpha a^2+\beta b^2+\gamma c^2)r^2+(15/2)Qr^2\,.
\end{align*}
We have $J(0)=3\pi r^2/2$. 
The assertation follows by inserting the formulas for $bcI(\alpha)+caI(\beta)+abI(\gamma)$, $bcJ(\alpha)+caJ(\beta)+abJ(\gamma)$, and $J(0)$ in the formulas for $p(i)$ at the end of the proof of Theorem~\ref{Thm1}.
\end{proof}

\section{Other convex bodies}

Knowing the width function $w$ and the support function $s$ of any plane convex body $\C$, it is possible to calculate the hitting probabilities for $\C$ and $\R$ with Theorem \ref{Thm1} and, if $w(\phi)=2s(\phi)$ for every $\phi\in[0,2\pi)$, with Corollary \ref{w=2s}, respectively. $w$ and $s$ for a parallelogram and a triangle are to be found in \cite[pp.\:319/320]{Ren_Zhang}.

In particular, numerical values for $p(i)$ are easily obtained by numerical integration of the integrals $I(\alpha)$, $I(\beta)$, $I(\gamma)$, $J(0)$, $J(\alpha)$, $J(\beta)$, and $J(\gamma)$. 

\bigskip
\begin{center}
Uwe B\"asel\\[0.15cm]
HTWK Leipzig,\\
Fakult\"at f\"ur Maschinenbau und Energietechnik,\\ 
PF 30 11 66, 04251 Leipzig, Germany\\[0.15cm]
{\small uwe.baesel@htwk-leipzig.de}
\end{center}

\end{document}